\documentclass{article}
\usepackage[utf8]{inputenc}

\usepackage[margin=1in]{geometry}
\usepackage{amsfonts}
\usepackage{amsthm}
\usepackage{amsmath}
\usepackage{mathtools}
\usepackage{amssymb}
\usepackage{siunitx}
\usepackage{physics}
\usepackage{enumerate}
\usepackage{enumitem}
\usepackage{multicol}
\usepackage[noadjust]{cite}
\usepackage{caption}
\usepackage{subcaption}
\usepackage{graphicx}
\usepackage{tcolorbox}
\usepackage{dsfont}
\usepackage{xcolor}
\usepackage{tikz-cd}
\usetikzlibrary{backgrounds}
\usepackage{subfiles}
\usepackage{longtable}
\usepackage{hyperref}
\usepackage{bbm}
\makeatletter
\def\blfootnote{\gdef\@thefnmark{}\@footnotetext}
\makeatother
\newtheorem{theorem}{Theorem}
\newtheorem*{theorem*}{Theorem}

\newtheorem{lemma}[theorem]{Lemma}

\newtheorem{propn}[theorem]{Proposition}

\newcommand{\B}[1]{\mathbb{#1}}
\newcommand{\C}[1]{\mathcal{#1}}

\newcommand{\is}{\mathbbm{1}}

\usepackage[skip=7.5pt plus1pt, indent=20pt]{parskip}

\title{Branching-Selection Particle Systems and Inverse First Passage Problems}
\author{Jacob Mercer\footnote{\texttt{jacob.mercer@maths.ox.ac.uk}, Department of Statistics, University of Oxford}}
\date{}  

\begin{document}
\maketitle

\begin{abstract} 
   A generalised inverse first passage problem asks whether, given a probability measure $p$ on $[0,\infty]$, one can find a boundary $b:[0,\infty]\to \B{R}$ such that the stopping time:
   \begin{align*}
       \tau:=\inf\left\{t:\Lambda\int_0^t \omega(W_s-b(s))ds \geq U\right\}
   \end{align*}
   has distribution $p$, where $U\sim Exp(1)$, $\Lambda\in(0,\infty)$ and $\omega$ is a monotonic decreasing function. We construct a branching-selection particle system whose hydrodynamic limit is governed by a free boundary problem and connect this to the generalised inverse first passage problem. In the $N$-particle system, particles move as independent Brownian motions, branch at a prescribed rate, and are removed at a rate proportional to their location relative to a position $b^N(t)$ which is a function of the empirical distribution. We identify the limit of $b^N$ as the solution of the inverse first passage problem.
\end{abstract}

\section{Introduction}\blfootnote{This publication is based on work supported by the EPSRC Centre for Doctoral Training in Mathematics of Random Systems: Analysis, Modelling, and Simulation (EP/S023925/1)}
The inverse first passage problem (IFPP) is a classical problem in probability which originates from a question posed by Shiryaev. Given a standard Brownian motion $(W_t)_{t\geq 0}$ and a probability distribution $p$ on $[0,\infty]$, does there exist a boundary $b(t)$ such that the first passage time $\tau:=\inf\{t:W_t\leq b(t)\}$ has distribution $\tau\sim p$. The problem gained interest as it relates to credit default; defaulting being modelled as the point at which the stochastic process crosses the boundary $b$, and $p$ being the distribution of default time, which may be observed from the price of credit-default options (see \cite{allevanedaZhu}, \cite{hullWhiteI}, \cite{hullWhiteII}, for example). 

%The problem has recieved much attention in the literature. Anulova \cite{anulova} establishes that, in the two-sided case, when the stopping time is defined as $\tau:=\inf\{t:|W_t|\geq b(t)\}$ then there indeed exists a barrier $b$ such that $\tau\sim p$ for any probability measure $p$ on $(0,\infty]$. It was subsequently shown by Chen et al. \cite{ccs22}, \cite{cccs11}, \cite{cccs06} that, for general stochastic processes $X$ defined by $dX_t = \mu(X_t,t)dt + \sigma(X_t,t)dW_t$, then there exists, under certain conditions, a barrier solving the IFPP, and the barrier inherits regularity from the regularity of $p$. %In particular, they showed that if $p((t,\infty])\in C^{\alpha+1/2}((t_1,t_2))$, where $1/2<\alpha \notin \B{N}$, then $b(t)\in C^\alpha((t_1,t_2))$. Janson and Ekstr\"{o}m also connected the IFPP to a certain optimal stopping problem \cite{ekstromJansen}. 

It was noted by Klump \cite{ifptpAsHydroKlump} that, in the special case that $p$ is the Exponential distribution $Exp(1)$, the IFPP can be related to the hydrodynamic limit of the $N$-branching Brownian motion particle system ($N$-BBM). In the $N$-BBM, $N$ particles move on $\B{R}$ as independent Brownian motions, each branching independently at rate $1$. Simultaneously with branching, the leftmost particle is deleted, so that there remains exactly $N$ particles at all times. It was shown by De Masi et al. \cite{hydroNBBM} that as $N\to\infty$, the empirical density of the $N$-BBM converges to the solution of the free boundary problem:
\begin{align*}
\begin{cases}
    u_t=\frac{1}{2}u_{xx} + u  & x\in(L_t,\infty), t>0, \\
    u(x,t)=0 & x\notin (L_t,\infty), t>0, \\
    \int_{L_t}^\infty u(y,t)dy=1 & t>0. \\
\end{cases}
\end{align*}
By the Feynman-Kac theorem, we can understand the solution $u$ by $\int_x^\infty u(y,t)dy = e^t\B{P}(W_t\geq x, \tau>t)$, where $W$ is a Brownian motion with initial density $u(x,0)$ and $\tau:=\inf\{t:W_t\leq L_t\}$. Then taking $x\to -\infty$, the condition $\int_{L_t}^\infty u(y,t)dy=1$ gives that $\B{P}(\tau>t)=e^{-t}$. That is, if $(u,L)$ solves the free boundary problem, then $L$ solves the inverse first passage problem with $p=Exp(1)$. Since the location of the leftmost particle of the $N$-BBM ($X_1^N(t)$, say) converges to the free boundary of the PDE, we can think of $X_1^N(t)$ as approximating the solution of the IFPP. As noted by Klump \cite{ifptpAsHydroKlump}, to obtain a solution to the general IFPP, one must simply consider the $N$-BBM with time dependent branching rate $f(t)$, where $p([t,\infty))=\exp(-\int_0^t f(s)ds)$. Under this small adaptation, the $N$-BBM should have a hydrodynamic limit which solves the PDE $u_t(x,t)= \frac12u_{xx}(x,t)+f(t)u(x,t)$
on $(L_t,\infty)$; correspondingly, if $(u,L)$ solves this PDE, then $L$ solves the IFPP with $p((t,\infty)=\exp(-\int_0^t f(s)ds)$. 

In \cite{ettingerHeningEvans}, \cite{ettingerHeningKwong}, a generalisation of the IFPP is studied. They consider the stopping time:
\begin{align}
    \tau:=\inf\Big\{t:\Lambda \int_0^t \omega(W_s-b(s))ds > U\Big\},
\end{align}
where $\omega$ is a decreasing function with $\omega(-\infty)=1$ and $\omega(\infty)=0$ and $U\sim Exp(1)$. Thinking of $\tau$ as a default time, this generalisation considers that defaulting does not occur at the immediate instant that $W_t\leq b(t)$, but as a function of the path of $W$ up to time $t$, and is used as a model of defaulting (see \cite{smoothDefault}, for example). If we consider $\omega(x)=\is_{x\leq 0}$, this is the first time that $W$ has spent more than $Exp(1)/\Lambda$ amount of time below $b$. Then the soft-killed inverse first passage problem (SKIFPP) is the problem of finding $b$ such that $\tau\sim p$ given a distribution $p$ and an initial distribution for $W_0$. It is already known that, with some additional assumptions on the initial density and on $p$, the SKIFPP has a unique solution $b$ when $\omega(x)=\is_{x\leq 0}$ (\cite{ettingerHeningKwong}) or $\omega\in C^3(\B{R})$ with $\{x:\omega(x)\in (0,1)\}$ bounded (\cite{ettingerHeningEvans}). Moreover, the sub-probability density of $\{W_t,\tau>t\}$ is described by the free boundary problem:
\begin{align*}\begin{cases}
    u_t(x,t)=\frac{1}{2}u_{xx}(x,t) - \omega(x-b(t))u(x,t) & x\in \B{R}, t>0, \\
    u(x,0)=\rho(x) & x\in \B{R}\\
    \int_\B{R} u(y,t)dy=p([t,\infty))\\
    %\int_{\B{R}}\omega(y-b(0))u(y,0)dy = f(0).
\end{cases}\end{align*}
The free boundary $b$ is determined by the integral constraint $\int_{\B{R}}u(y,t)dy=p([t,\infty))$.

In this paper, we find the branching-selection particle system whose hydrodynamic limit corresponds to the solution of this SKIFPP. %That is, we find the $N$ particle system corresponding to the SKIFPP as the $N$-BBM corresponds to the IFPP. 
In particular, we will construct a branching-selection particle system whose empirical density converges to the solution of the free boundary problem:
\begin{align} \label{omegaHydroLim}
    \begin{cases}
    u_t(x,t)=\frac{1}{2}u_{xx}(x,t) + f(t)u(x,t) - \Lambda\omega(x-b(t))u(x,t) & x\in \B{R}, t>0, \\
    u(x,0)=\rho(x) & x\in \B{R}\\
    \int_\B{R} u(y,t)dy=1\\
    %\int_{\B{R}}\omega(y-b(0))u(y,0)dy = f(0)
\end{cases}
\end{align}
and show that if $(u,b)$ is a solution, then $b$ solves the SKIFPP with initial distribution $\rho$ and $p([t,\infty))=\exp(-\int_0^t f(s)ds)$. In the particle system, particles branch at rate $f$ and a particle at location $x$ is deleted with probability proportional to $\omega(x-b^N(t))$ where $b^N(t)$ is a function of the empirical distribution at time $t$. Accordingly, $b^N(t)$ can be seen as an approximation of the solution $b$ of the SKIFPP. 

In the specific case that $\omega(x)=\is_{x\leq 0}$, if we choose $f(t)\equiv 1$ and $\Lambda\in (1,\infty)$, then we can combine the PDE and the integral condition to yield the non-linear PDE
\begin{align*}
    u_t(x,t)=\frac12 u_{xx}(x,t)+u(x,t)\left(1-\Lambda\is_{\int_{-\infty}^x u(y,t)dy \leq \Lambda^{-1}}\right).
\end{align*}
In this case, $b(t)$ is the point left of which a proportion $\Lambda^{-1}$ of the mass of $u$ lies. %In terms of the particle system, $(\is_{x\leq 0},1,\alpha^{-1},N)$-BBM is the branching Brownian motion in which at each branching event, we choose a particle uniformly from among the $\lfloor \alpha N\rfloor$ leftmost to be deleted. 

\section{Construction of the process}\label{constructionSec}

Essentially this process will be a branching Brownian motion in which a particle at location $X(t)$ is deleted at rate $\Lambda\omega(X(t)-b^N(t))$ where $b^N:[0,\infty)\to \B{R}$ is a reference point chosen so that the total branching rate at time $t$ is $Nf(t)$. Branching and deletion will happen simultaneously so that there are always exaclty $N$ particles. 

We now give a formal construction. Throughout this work, we will assume that $\omega:\B{R}\to [0,1]$, the \textit{selection weighting}, is a monotone decreasing function with $\omega(x)\to 1$ as $x\to-\infty$ and $\omega(x)\to 0$ as $x\to\infty$. We also assume that $f:[0,\infty)\to [0,\infty)$, the \textit{branching rate} function,is bounded and that $\Lambda > \|f\|_\infty$. 

Then we will describe the $(\omega,f,\Lambda,N)$-BBM at time $t$ by the probability measure $\mu_t^N:=\frac1N\sum_{i=1}^N \delta_{X_i^N(t)}$ where $X^N(t):=(X_1^N(t),\ldots,X_N^N(t))$ denote the locations of the $N$ particles in the system. Let $X^N(0)\in \B{R}^N$ be some initial configuration with $X_1^N(0)\leq X_2^N(0)\leq \cdots\leq X^N_N(0)$ and define $\rho^N:=\frac1N\sum_{i=1}^N \delta_{X^N_i(0)}$. Suppose that $\rho^N$ converges weakly to $\rho$, with $\rho\ll\lambda$ ($\rho$ is absolutely continuous with respect to the Lebesgue measure $\lambda$). We will use $\rho$ to denote both the probability distribution and its density function.

Let $\C{N}(t)$ be a Poisson process of intensity $Nf(t)$ and let the discontinuities of $\C{N}(t)$ be $\tau_1<\tau_2<\cdots$. These will be the branching times of the process. We construct the process inductively as follows. Initially, the $N$ particles have configuration $X^N(0)$. Then on $[0,\tau_1)$, drive the particle at location $X_i^N(0)$ by Brownian motion $(W_i(t))_{0\leq s<\tau_1}$, so the locations of the $N$ particles at time $t\in [0,\tau_1)$ are $X_i^N(t):=X_i^N(0)+W_i(t)$ for $i\in [N]$, where $[N]$ denotes the set $\{1,2,\ldots,N\}$. At each stopping time $\tau_i$, we will relabel the particles so that they are ordered. That is $X_1^N(\tau_i)\leq X_2^N(\tau_i)\leq \cdots \leq X_N^N(\tau_i)$ for all $i$, but this ordering may not hold for $t\in (\tau_i,\tau_{i+1})$. We define $\Theta^N:\B{R}^N\to\B{R}^N$ to be the function which orders $\B{R}^N$-valued vectors, and say that $\Theta^N_i(\vec{v})$ is the $i$\textsuperscript{th} smallest element of $\vec{v}$.

Now define $b^N(t):=\inf\left\{x:\Lambda \langle \mu_t^N,\omega(\cdot-x)\rangle \geq f(t)\right\}$, where the notation $\langle \mu,g\rangle$ means $\int_\B{R} g(x)\mu(dx)$. Since $\omega$ is decreasing, $\langle \mu_t^N,1\rangle=1$, and $\Lambda > \|f\|_\infty$% , thus as $x\to\infty$, $\langle \mu_t^N,\omega(\cdot-x)\rangle \to 1$, and as $x\to -\infty$, $\langle \mu_t^N,\omega(\cdot-x)\rangle \to 0$,
, therefore $b^N(t)$ is well defined. Moreover we can observe that if $\omega$ is continuous, then $\frac{\Lambda}{N} \sum_{i=1}^N \omega(X_i^N(t)-b^N(t))$ is exactly $f(t)$. $b^N(t)$ defines a moving boundary. %If $\omega$ is discontinuous, then $\frac{\Lambda}{N}\sum_{i=1}^N \omega(X_i^N(t)-b^N(t))$ differs from $f(t)$ by at most $1/N$ almost surely. 
Then at time $\tau_1$, the particle of rank $I_1$ branches, and simultaneously particle of rank $J_1$ is deleted, where $\B{P}(I_1=i)=1/N$ and independently $\B{P}(J_1=j)=\frac{\omega(\Theta^N_j(X^N(\tau_1-))-b^N(\tau_1-))}{\sum_{\ell=1}^N \omega(\Theta^N_\ell(X^N(\tau_1-))-b^N(\tau_1-))}$%=\frac{\Lambda\omega(\Theta^N_j(X^N(\tau_1-))-b^N(\tau_1))}{Nf(t)}$ 
for $i,j\in [N]$. Repeat this inductively for each interval $[\tau_k,\tau_{k+1}]$. 

%Essentially this process deletes particles, at time $\tau_i$, according to how far to the left of the location $b^N(\tau_i)$. Viewed another way, the particle of rank $j$ is deleted at rate $\Lambda\omega(\Theta^N_j(X^N(t))-b^N(t))$ where $b^N:[0,\infty)\to \B{R}$ is a reference point chosen so that the total branching rate at time $t$ is $Nf(t)$.

\section{Main results}

The main result of this paper is the following hydrodynamic limit, which describes the limiting behaviour of the above described $(\omega,f,\Lambda,N)$-BBM process by the free boundary problem:
\begin{align} \label{omegaNBMMHydro}
    \begin{cases}
        u_t = \frac12 u_{xx} + f(t)u - \Lambda\omega(x-b(t))u & x\in \B{R},\;t>0,\\
        \int_\B{R} u(y,t)dy = 1 & t>0,\\
        u(x,0)=\rho(x) & x\in \B{R}.
    \end{cases}
    \end{align}

\begin{theorem} \label{1stMainThemOmega}
    Let $\omega:\B{R}\to[0,1]$ be a selection weighting such that either $\omega$ is continuous or $\omega(x)=\is_{x\leq 0}$. Let $(\mu_t^N)_{t\geq 0}$ be the measure-valued process describing the $(\omega,f,\Lambda,N)$-BBM process with initial condition $\rho^N$ and let $Q_T^N$ denote its law. Then for any $T\in (0,\infty)$, the sequence of measures $Q_T^N$ has a weak limit, $Q_T^\infty$. Moreover, if $(\mu_t)_{t\in [0,T]}\sim Q_T^\infty$ and $b(t):=\inf\{x:\Lambda\langle \mu_t,\omega(\cdot-x)\rangle \geq f(t)\}$, then $\mu_t$ has density $u(x,t)$ and $(u,b)$ is $Q_T^\infty$-a.s. the unique weak solution to the FBP \eqref{omegaHydroLim}.
\end{theorem}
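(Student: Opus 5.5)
We will follow the standard martingale route for hydrodynamic limits: tightness, then identification of limit points, then uniqueness of the limiting weak solution.

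\textbf{Tightness.} For a test function $\phi\in C_b^2(\B{R})$, the construction in Section~\ref{constructionSec} gives the semimartingale decomposition
\begin{align*}
\langle \mu_t^N,\phi\rangle = \langle \rho^N,\phi\rangle + \int_0^t \Big\langle \mu_s^N, \tfrac12\phi'' + f(s)\phi - \Lambda\,\omega(\cdot-b^N(s))\phi\Big\rangle ds + M_t^N(\phi) + E_t^N(\phi).
\end{align*}
Here $M^N(\phi)$ is a martingale with quadratic variation $O(\|\phi\|_{C^1}^2 t/N)$, coming from the Brownian fluctuations and the compensated branching and deletion jumps. The term $E^N$ accounts for the fact that the deletion probability is normalised by $\sum_\ell\omega(\cdot)$ rather than by $Nf/\Lambda$.
\begin{itemize}
\item If $\omega$ is continuous, then $\frac\Lambda N\sum_i\omega(X_i^N-b^N)=f$ exactly, so $E^N\equiv0$.
\item If $\omega=\is_{x\le0}$, the normalising sum differs from $Nf/\Lambda$ by at most one atom, so $|E^N|\le Ct\|\phi\|_\infty/N$ a.s. (ties have probability zero).
\end{itemize}
Tightness of $\langle\mu^N,\phi\rangle$ in $D([0,T])$ then follows from Aldous' criterion, since the drift is bounded by $C(\|\phi\|_{C^2})$ and the jumps are of size $O(1/N)$. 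Tightness of the measures themselves requires a uniform tail estimate, $\sup_N\B{E}\sup_{t\le T}\mu_t^N(|x|>R)\to0$. We will obtain it by comparing with a BBM of rate $\|f\|_\infty$: the expected mass beyond $R$ is at most $e^{\|f\|_\infty T}\,\B{P}(\sup_{t\le T}|W_t+X_0|>R)$. By Jakubowski's criterion, $Q_T^N$ is tight on $D([0,T],\C{P}(\B{R}))$, and every limit point is supported on continuous paths.

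\textbf{Identification.} Take a subsequential limit $\mu$, pass to a.s. convergence via Skorokhod, and write $F_\mu(x)=\langle\mu,\omega(\cdot-x)\rangle$, which is nondecreasing in $x$. First we show that $\mu_t\ll\lambda$ for every $t$. In the limiting equation the reaction term is bounded, so the same comparison gives $\langle\mu_t,\phi\rangle\le e^{\|f\|_\infty t}\langle\rho,P_t\phi\rangle$ for $\phi\ge0$. Together with $\rho\ll\lambda$, this yields a density $u(\cdot,t)\le e^{\|f\|_\infty t}P_t\rho$, where $P_t$ is the heat semigroup.

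Next we need $b^N(s)\to b(s)$ for a.e. $s$, which is the main obstacle, since $b^N$ is defined as a generalized inverse. Because $\mu_s$ has a density, $x\mapsto F_{\mu_s}(x)$ is continuous, including in the case $\omega=\is_{x\le0}$, where $F_{\mu_s}(x)=\mu_s((-\infty,x])$. Hence $F_{\mu^N_s}\to F_{\mu_s}$ locally uniformly.

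We then need $F_{\mu_s}$ to be strictly increasing near the level $f(s)/\Lambda$, so that the inverse is continuous there. For the indicator case, strict increase follows from $u(\cdot,s)>0$ everywhere. To get this positivity, we would compare with the killed heat equation: the mass at $s$ is at least $e^{-\Lambda s}P_s\rho>0$ by a lower comparison bound, derived from the same martingale identity with $\phi\ge0$. For general continuous $\omega$ we need a level crossing; if $\omega$ is flat on an interval, $b$ may jump. In that case we argue that any limit point $\bar b$ of $b^N$ satisfies $\Lambda\langle\mu_s,\omega(\cdot-\bar b)\rangle=f(s)$, and that this value alone, not $\bar b$ itself, is all the drift term sees. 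Then $\Lambda\langle\mu_s^N,\omega(\cdot-b^N)\phi\rangle\to\Lambda\langle\mu_s,\omega(\cdot-b)\phi\rangle$ by dominated convergence, and letting $N\to\infty$ kills $M^N$ and $E^N$. This shows $(u,b)$ is a weak solution of \eqref{omegaHydroLim}, with total mass identically $1$.

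\textbf{Uniqueness.} Uniqueness of weak solutions implies that all limit points coincide, so $Q_T^N\Rightarrow Q_T^\infty=\delta_{(\mu_t)}$.

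We plan to prove uniqueness by a Feynman--Kac representation. Any weak solution satisfies $\langle u_t,\phi\rangle=\B{E}_\rho\!\big[\phi(W_t)e^{\int_0^t f-\Lambda\int_0^t\omega(W_s-b(s))ds}\big]$. The mass constraint then says that $b$ solves the SKIFPP with $p([t,\infty))=e^{-\int_0^tf}$. Given two solutions $b_1,b_2$, we would use monotonicity of $\omega$ for a comparison argument: let $t_0$ be the first time at which $b_1\neq b_2$. Where $b_1>b_2$ on a set of times, the killing is larger, so the survival probability at later times is strictly smaller, contradicting equal total mass.

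Making this rigorous is delicate; alternatively, we may appeal to the uniqueness results of \cite{ettingerHeningKwong} and \cite{ettingerHeningEvans}. We expect this step and the convergence of $b^N$ to be the hardest parts of the proof.
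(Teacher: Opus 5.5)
Your tightness and identification steps are sound and differ from the paper only in technique. You use a uniform tail bound on $\C{P}(\B{R})$ instead of compactifying to $\bar{\B{R}}$. You use an upper comparison instead of the blue/red BBM coupling to get $\mu_t\ll\lambda$. In the indicator case you get actual convergence $b^N(s)\to b(s)$ from $u(\cdot,s)\ge e^{-\Lambda s}P_s\rho>0$, whereas the paper only shows $\mu_s((b(s),b^\star(s)])=0$. One phrase in the continuous case should be replaced: the drift does not ``only see the value'' $\langle\mu_s,\omega(\cdot-\bar b)\rangle$. The correct reason is that $x\mapsto\omega(y-x)$ is non-decreasing, so $\bar b\ge b(s)$ and equal integrals force $\omega(\cdot-\bar b)=\omega(\cdot-b(s))$ $\mu_s$-a.e.

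\textbf{The gap is uniqueness when $\omega$ is merely continuous.} Your comparison argument does not go through. Nothing is known about the regularity of $b_1,b_2$, so $b_1-b_2$ may change sign in every right-neighbourhood of $t_0=\inf\{t:b_1(t)\neq b_2(t)\}$. Then there is no interval on which one killing dominates the other. Extra killing where $b_1>b_2$ can be compensated where $b_1<b_2$, so equal total masses give no contradiction. The fallback to the literature does not cover the statement either. The result of \cite{ettingerHeningEvans} needs $\omega\in C^3$ with $\{\omega\in(0,1)\}$ bounded, plus regularity of $\rho$ and $f$ (compare the hypotheses of Theorem~\ref{2ndMainThemOmega}). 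Theorem~\ref{1stMainThemOmega} assumes none of these. The paper relies on a citation only for $\omega=\is_{x\le 0}$.

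The missing idea is to compare the measures rather than the barriers, via a Gr\"onwall estimate in $d(\mu,\nu)=\sup_{\|g\|_\infty\le 1}|\langle \mu-\nu,g\rangle|$. Test \eqref{convOfWeakRep} for two solutions $\mu,\nu$ against the solution of $\phi_s+\frac12\phi_{xx}=0$ on $[0,t]$ with $\phi(\cdot,t)=\psi$ and $\|\psi\|_\infty\le 1$; then only the reaction terms remain. Suppose, say, $b^\mu(s)\le b^\nu(s)$. Monotonicity of $\omega$ gives $\omega(\cdot-b^\nu(s))-\omega(\cdot-b^\mu(s))\ge 0$. The mass constraint $\langle \nu_s,\omega(\cdot-b^\nu(s))\rangle=f(s)/\Lambda=\langle\mu_s,\omega(\cdot-b^\mu(s))\rangle$ then controls the dangerous term:
\begin{align*}
|\langle \nu_s,\phi(\cdot,s)(\omega(\cdot-b^\nu(s))-\omega(\cdot-b^\mu(s)))\rangle|
&\le\langle \nu_s,\omega(\cdot-b^\nu(s))-\omega(\cdot-b^\mu(s))\rangle\\
&=\langle \mu_s-\nu_s,\omega(\cdot-b^\mu(s))\rangle\le d(\mu_s,\nu_s).
\end{align*}
Together with the trivially controlled terms this gives $d(\mu_t,\nu_t)\le 3\Lambda\int_0^t d(\mu_s,\nu_s)\,ds$, hence $\mu\equiv\nu$ by Gr\"onwall, with no information about $b$ required. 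Since $d$ is the total variation distance and your limits are atomless, the same estimate should also work for $\omega=\is_{x\le 0}$. You would then not need the literature at all.
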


Using the results of \cite{ettingerHeningEvans}, \cite{ettingerHeningKwong}, we also connect this to the SKIFPP. 

\begin{theorem} \label{2ndMainThemOmega}
    Suppose that $f$ is continuous, and that $\rho\in C^2(\B{R})$ with $\rho$, $\rho'$, and $\rho''$ bounded. Suppose that either $\omega(x)=\is_{x\leq 0}$ or $\omega\in C^3(\B{R})$ is non-increasing and $\{x:\omega(x)\in (0,1)\}$ is bounded. If $(u,b)$ is the unique weak solution to \eqref{omegaNBMMHydro} and $\tau$ is the stopping time:
    \begin{align*}
        \tau:=\inf\left\{t:\Lambda\int_0^t \omega(W_s-b(s))ds\geq U\right\},
    \end{align*}
    where $U\sim Exp(1)$ and $W_0\sim \rho$, then $b$ is the unique barrier such that $\B{P}(\tau>y)=\exp(-\int_0^y f(s)ds)$. That is, $b$ solves the SKIFPP. 
\end{theorem}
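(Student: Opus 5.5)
The plan is to reduce Theorem \ref{2ndMainThemOmega} to the free boundary problem for the killed process studied in \cite{ettingerHeningEvans}, \cite{ettingerHeningKwong}, using the exponential change of measure $v(x,t):=e^{-\int_0^t f(s)ds}u(x,t)$. Since $(u,b)$ is the weak solution of \eqref{omegaNBMMHydro} from Theorem \ref{1stMainThemOmega}, a direct computation in the weak formulation shows that $v$ is a weak solution of
\begin{align*}
    v_t=\tfrac12 v_{xx}-\Lambda\omega(x-b(t))v,\qquad v(x,0)=\rho(x),\qquad \int_{\B{R}}v(y,t)dy=\exp\Big(-\int_0^t f(s)ds\Big).
\end{align*}
Here the linear term $f(t)u$ is absorbed by the factor $e^{-\int_0^t f}$, and the mass constraint $\int u=1$ becomes the prescribed survival function $p([t,\infty))=\exp(-\int_0^t f)$. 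The test-function identity is justified because $f$ is continuous and bounded, so $t\mapsto e^{-\int_0^t f}$ is $C^1$.

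Next we identify $v$ with the sub-probability density of $\{W_t,\tau>t\}$ for the fixed barrier $b$. Conditioning on the path of $W$ and integrating out $U$ gives $\B{P}(\tau>t\mid W)=\exp(-\Lambda\int_0^t\omega(W_s-b(s))ds)$. Therefore, for bounded $g$,
\begin{align*}
    \B{E}[g(W_t)\is_{\tau>t}]=\B{E}\Big[g(W_t)\exp\Big(-\Lambda\int_0^t\omega(W_s-b(s))ds\Big)\Big],
\end{align*}
which is the Feynman--Kac representation of the weak solution to the linear equation above, with bounded measurable potential $\Lambda\omega(\cdot-b(t))$ and initial datum $\rho$. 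We then invoke uniqueness of bounded weak solutions of this linear parabolic equation; this holds for a bounded nonnegative potential, for example by a duality or Gr\"onwall argument in $L^1\cap L^\infty$. It follows that $v(\cdot,t)$ is the density of $W_t$ on $\{\tau>t\}$. Taking $g\equiv1$ yields $\B{P}(\tau>t)=\int v(y,t)dy=\exp(-\int_0^t f(s)ds)$, so $b$ solves the SKIFPP. Measurability and local boundedness of $b$ are needed so that the potential makes sense; these follow from its definition via the continuous map $\mu_t\mapsto b(t)$ and the tightness in Theorem \ref{1stMainThemOmega}.

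For uniqueness of the barrier, we apply the results of \cite{ettingerHeningKwong} for $\omega=\is_{x\leq 0}$ and of \cite{ettingerHeningEvans} for $\omega\in C^3$ with $\{\omega\in(0,1)\}$ bounded. Under exactly the hypotheses assumed here ($\rho\in C^2$ with bounded derivatives, continuous $f$, which makes the survival function $C^1$ with positive density), these results give that the SKIFPP has at most one barrier in their admissible class. The main obstacle is verifying that our $b$ lies in that admissible class, for instance has the required regularity or local boundedness, or matches their notion of solution. My first approach is to work in the opposite direction: take the unique barrier $\tilde b$ they construct, and observe via the computation above that $(e^{\int_0^t f}\tilde v,\tilde b)$ is a weak solution of \eqref{omegaNBMMHydro}. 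The uniqueness of weak solutions in Theorem \ref{1stMainThemOmega} then forces $\tilde b=b$, which avoids any need to check the regularity of $b$ directly.
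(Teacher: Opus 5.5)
Your proposal is correct, and its uniqueness half is the paper's argument. The paper takes the solution $(v,c)$ supplied by \cite{ettingerHeningKwong}, \cite{ettingerHeningEvans} and turns it into a weak solution of \eqref{omegaNBMMHydro} by a Brownian rescaling together with the factor $e^{\int_0^t f(s)ds}$. It then identifies this with $(u,b)$ through the uniqueness in Theorem \ref{1stMainThemOmega}, and imports from the literature both that $c$ solves the SKIFPP and that $c$ is the unique continuous barrier doing so.

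Where you genuinely differ is the existence half. You check directly that $b$ solves the SKIFPP: $e^{-\int_0^t f(s)ds}u$ and the Feynman--Kac measure $g\mapsto\B{E}[g(W_t)\exp(-\Lambda\int_0^t\omega(W_s-b(s))ds)]$ solve the same linear equation with the fixed bounded potential $\Lambda\omega(\cdot-b(t))$, and that linear problem has unique measure-valued solutions. This argument is self-contained and uses only the hypotheses of Theorem \ref{1stMainThemOmega}, with no regularity of $\rho$ and no $C^3$ assumption on $\omega$. So it shows $b$ solves the SKIFPP more generally, and the literature is needed only for uniqueness of the barrier.

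The linear uniqueness you need is the backward-heat/Gr\"onwall argument of Proposition \ref{uniquenessOmegaCts}. It is simpler here because $b$ is fixed, but run it in total variation, testing against $C^2_b$ functions (whose supremum still gives the total variation norm), since $\omega(\cdot-b(s))\phi(\cdot,s)$ need not be continuous. The paper's route is shorter. Under the theorem's hypotheses your Feynman--Kac step is in fact redundant, because once $b=\tilde b$ existence is inherited from $\tilde b$.

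Three small repairs are needed.
\begin{enumerate}
\item The paper applies the cited results only after the rescaling $x\mapsto\Lambda^{1/2}x$, $t\mapsto\Lambda t$, which reduces the killing rate $\Lambda\omega$ to unit rate. You should include this step, or check that the cited theorems allow a general rate, rather than asserting the hypotheses match exactly.
\item For $(e^{\int_0^t f(s)ds}\tilde v,\tilde b)$ to be a weak solution in the sense of \eqref{convOfWeakRep}, $\tilde b(t)$ must coincide with $\inf\{x:\Lambda\langle\tilde\mu_t,\omega(\cdot-x)\rangle\geq f(t)\}$. This holds for a.e.\ $t$: differentiate the mass constraint, and use $\tilde v(\cdot,t)>0$ for $t>0$, which makes $x\mapsto\langle\tilde\mu_t,\omega(\cdot-x)\rangle$ strictly increasing. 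The paper also glosses over this.
\item Measurability of $b$ does not follow from continuity of $\mu\mapsto b$, which fails in general for the indicator, nor from tightness. Instead write $\{t:b(t)<a\}=\bigcup_{q\in\B{Q},\,q<a}\{t:\Lambda\langle\mu_t,\omega(\cdot-q)\rangle\geq f(t)\}$ and use weak continuity of $t\mapsto\mu_t$ (Proposition \ref{vagueContinuity}). Local boundedness is unnecessary, since $\omega$ is bounded.
\end{enumerate}
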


\begin{proof}
    %Consider first the case that $\omega(x)=\is_{x\leq 0}$. Then, defining $G(t)=\exp(-\int_0^{t/\Lambda}f(s)ds)$, by Theorem 1.8 \cite{ettingerHeningKwong}, there is a unique weak solution $(v,c)$ solving 
    Let $\tilde{\omega}(x)=\omega(\Lambda^{-1/2}x)$, and suppose that $(v,c)$ is a weak solution to the PDE:
    \begin{align}\label{pdeFromLit}
    \begin{cases}
        v_t(x,t) = \frac12 v_{xx}(x,t) -\tilde{\omega}(x-c(t))v(x,t) & x\in \B{R},\;t>0,\\
        \int_\B{R} v(y,t)dy = G(t):=\exp(-\int_0^{t/\Lambda}f(s)ds) & t>0,\\
        \int_{\B{R}} v(y,0)\omega(y-c(0))dy=f(0)\\
        v(x,0)=\Lambda^{1/2}\rho(\Lambda^{-1/2}x) & x\in \B{R}.\\
    \end{cases}
    \end{align}
    Then it is easily checked that \begin{align*}(\tilde{u}(x,t),\tilde{b}(t)):=\left(\Lambda^{1/2}\exp(\int_0^t f(s)ds)v(\Lambda^{1/2}x,\Lambda t),\Lambda^{-1/2}c(\Lambda t)\right)\end{align*}
    is a weak solution of \eqref{omegaNBMMHydro}, and therefore by assumption coincides with limit $(u,b)$ found in Theorem \ref{1stMainThemOmega}. In the case that $\omega(x)=\is_{x\leq 0}$, Theorem 1.8 \cite{ettingerHeningKwong} tells us that \eqref{pdeFromLit} has a unique weak solution and in the case that $\omega\in C^3(\B{R})$ and $\{x:\omega(x)\in (0,1)\}$ is bounded, Corollary 2.7 \cite{ettingerHeningEvans} tells us that \eqref{pdeFromLit} has a unique (classical) solution. Theorem 1.8 \cite{ettingerHeningKwong} and Corollary 2.7 \cite{ettingerHeningEvans} also tell that $c$ is the unique continuous barrier such that
    \begin{align*}
    \B{P}\left(\inf\left\{t:\int_0^t \omega(W_s-c(s))ds \geq U\right\}>y\right)=G(y)=\exp(-\int_0^{y/\Lambda}f(s)ds).
    \end{align*}
    Then using the fact that $\Lambda^{-1/2}W_{\Lambda z}\overset{d}{=}W_z$ and $c(s)=\Lambda^{1/2}b(s/\Lambda)$, this implies that
    \begin{align*}
        \B{P}\left(\inf\left\{t:\Lambda \int_0^t \omega(W_s - b(s))ds\geq U\right\}>y\right)\exp(-\int_0^y f(s)ds),
    \end{align*}
    which is to say that $b$ solves the SKIFPP. 
\end{proof}

\section{Proof of Theorem \ref{1stMainThemOmega}}

The method in this section follows similarly to Demircigil and Tomasevic \cite{demircigilTomasevic} and the author's paper \cite{me3}. As in \cite{me3}, define the set of test functions $\C{BC}^{2,1}(\B{R}\times [0,\infty),\B{R})$ to be the set of bounded and continuous functions $f(x,t):\B{R}\times [0,\infty)\to\B{R}$ with bounded and continuous derivatives $f_x$, $f_{xx}$, and $f_t$. We also use $C_b(\B{R})$ to denote the continuous bounded functions $\B{R}\to\B{R}$. For a (local) martingale $(M_t)_{t\geq 0}$, we denote the quadratic variation process by $([M]_t)_{t\geq 0}$.

\begin{propn} \label{probRepPropn}
    Let $(\mu_t^N)_{t\geq 0}$ be the measure-valued process describing the $(\omega,f,\Lambda,N)$-BBM process, and define $b^N(t):=\inf\{x:\Lambda\langle \mu_t^N,\omega(\cdot-x)\rangle\geq f(t)\}$. Then for any bounded $\omega$ and any test function $\phi(x,t)\in \C{BC}^{2,1}(\B{R}\times [0,\infty),\B{R})$ we have
    \begin{align} \label{probRepIto} \langle \mu_t^N,\phi(\cdot,t)\rangle = \langle \rho^N, \phi(\cdot,0)\rangle &+ \int_0^t \langle \mu_s^N, \frac12 \phi_{xx}(\cdot,s) + \phi_t(\cdot,s)+f(s)\phi(\cdot,s)\rangle \\
    &- \frac{f(s)\langle \mu_s^N,\phi(\cdot,s)\omega(\cdot-b^N(s))}{\langle \mu_s^N,\omega(\cdot-b^N(s))\rangle} ds
    \nonumber + M_t^{N,W} + M_t^{N,b},
    \end{align}
    where $M^{N,W}_t$ is a continuous local martingale and $M^{N,b}_t$ is a local martingale, with $\B{E}[[M^{N,W}]_t],\B{E}[[M^{N,b}]_t]\to 0$ as $N\to\infty$ for fixed $t\geq 0$. 
\end{propn}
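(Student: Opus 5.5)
We apply Itô's formula to $\langle \mu_t^N,\phi(\cdot,t)\rangle=\frac1N\sum_i\phi(X_i^N(t),t)$ on each inter-branching interval $[\tau_k,\tau_{k+1})$, and add the jumps at the branching times. The process is piecewise a diffusion with jumps at the Poisson times $\tau_k$.

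\textbf{Continuous part.} Between jumps each particle moves as an independent Brownian motion. Itô's formula gives
\begin{align*}
d\langle\mu_t^N,\phi(\cdot,t)\rangle=\langle\mu_t^N,\tfrac12\phi_{xx}+\phi_t\rangle dt+\frac1N\sum_i\phi_x(X_i^N(t),t)\,dW_i(t).
\end{align*}
Set $M_t^{N,W}:=\frac1N\sum_i\int_0^t\phi_x(X_i^N(s),s)\,dW_i(s)$, with the sum taken over whichever particles are alive at each time; this is a continuous local martingale. Since the driving Brownian motions are independent, its quadratic variation is
\begin{align*}
[M^{N,W}]_t=\frac1{N^2}\sum_i\int_0^t\phi_x(X_i^N(s),s)^2\,ds\le \frac{t\|\phi_x\|_\infty^2}{N}\to0 .
\end{align*}

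\textbf{Jump part.} At $\tau_k$ the particle of rank $I_k$ is duplicated and the particle of rank $J_k$ is removed. The jump is therefore
\begin{align*}
\Delta_k=\frac1N\bigl(\phi(X_{I_k},\tau_k)-\phi(X_{J_k},\tau_k)\bigr),
\end{align*}
with positions evaluated at $\tau_k-$. Conditionally on $\C{F}_{\tau_k-}$ we have $\B{E}[\phi(X_{I_k})]=\langle\mu_{\tau_k-}^N,\phi\rangle$. We also have $\B{E}[\phi(X_{J_k})]=\langle\mu^N,\phi\,\omega(\cdot-b^N)\rangle/\langle\mu^N,\omega(\cdot-b^N)\rangle$. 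The jumps occur at intensity $Nf(t)$, so the compensator of $\sum_{\tau_k\le t}\Delta_k$ is
\begin{align*}
\int_0^t f(s)\Bigl(\langle\mu_s^N,\phi\rangle-\frac{\langle\mu_s^N,\phi\,\omega(\cdot-b^N(s))\rangle}{\langle\mu_s^N,\omega(\cdot-b^N(s))\rangle}\Bigr)ds .
\end{align*}
Define $M^{N,b}_t:=\sum_{\tau_k\le t}\Delta_k$ minus this compensator. It is a purely discontinuous local martingale with $[M^{N,b}]_t=\sum_{\tau_k\le t}\Delta_k^2$. Each jump satisfies $|\Delta_k|\le 2\|\phi\|_\infty/N$, so
\begin{align*}
\B{E}[M^{N,b}]_t\le \frac{4\|\phi\|_\infty^2}{N^2}\,\B{E}[\C{N}(t)]\le\frac{4\|\phi\|_\infty^2\|f\|_\infty t}{N}\to0 .
\end{align*}
Summing the continuous part, the compensator and the two martingales gives \eqref{probRepIto}.

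\textbf{Main obstacle.} The main technical point is to check that the deletion probabilities are well defined, i.e.\ that $\langle\mu^N_{s},\omega(\cdot-b^N(s))\rangle>0$. By the definition of $b^N$ and right-continuity in $x$ of the relevant quantity (using monotonicity of $\omega$), this quantity is at least $f(s)/\Lambda$ whenever $f(s)>0$. When $f(s)=0$ no branching occurs, and the integrand carries the factor $f(s)$, so that term vanishes. For the indicator case $\omega=\is_{x\le0}$, which is left-continuous in $b$ only, we instead treat the ratio directly as the conditional deletion law. We also need measurability of $s\mapsto b^N(s)$, which follows from its definition as an infimum over rationals.

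Localisation is handled by stopping at the jump times. Since $\phi$ and its derivatives are bounded, both martingales are in fact true $L^2$ martingales, which is more than the claimed local martingale property.
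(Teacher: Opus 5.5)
Your proposal is correct and follows essentially the same route as the paper: Itô's formula between branching times, compensation of the branching/deletion jumps via the conditional law of $(I_k,J_k)$ and the Poisson intensity $Nf$, then quadratic-variation bounds. Your identity $[M^{N,b}]_t=\sum_{\tau_k\le t}\Delta_k^2$ with $|\Delta_k|\le 2\|\phi\|_\infty/N$ is actually a cleaner route to the $O(1/N)$ bound than the paper's computation.

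The only slip is in your side remark on well-definedness, which the statement presupposes anyway. Right-continuity of $x\mapsto\langle\mu^N_s,\omega(\cdot-x)\rangle$ requires $\omega$ to be left-continuous, not merely monotone: for $\omega=\is_{x<0}$ and $0<f(s)\le\Lambda/N$ the denominator vanishes. Moreover, $\omega=\is_{x\le0}$ is exactly a case where right-continuity holds, so it needs no separate treatment.
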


\begin{proof}
    Let the branching times of the particle system be $\tau_1<\tau_2<\cdots$. Then by Ito's formula, for $t\in (\tau_{k-1},\tau_k)$, we have:
    $$\phi(X_i^N(t),t)=\phi(X_i^N(\tau_{k-1}),\tau_{k-1})+\int_{\tau_{k-1}}^t \phi_t(X_i^N(s),s)+\frac12 \phi_{xx}(X_i^N(s),s)\rangle ds + \int_{\tau_{k-1}}^t \phi_x(X_i^N(s),s)dW_i(s),$$
    so we have that:
    $$\langle \mu_t^N,\phi(\cdot,t)\rangle = \langle \mu_{\tau_{k-1}},\phi(\cdot,\tau_{k-1})\rangle + \int_{\tau_{k-1}}^t \langle \mu_s^N, \phi_t(\cdot,s)+\frac12 \phi_{xx}(\cdot,s)ds + \sum_{i=1}^N\int_{\tau_{k-1}}^t \phi_x(X_i^N(s),s) dW_i(s).$$
    Then, at time $\tau_k$, the quantity $\langle \mu_t^N, \phi(\cdot,t)\rangle$ increases by $\frac1N(\phi(\Theta^N_{I_i}(X^N(\tau_k-)),\tau_k)-\phi(\Theta^N_{J_i}(X^N(\tau_k-)),\tau_k))$ due to branching and deletion. Therefore integrating over $[0,t]$, combining the terms due to Brownian motion and the terms due to branching and selection, we yield:
    \begin{align*}
        \langle \mu_t^N,\phi(\cdot,t)\rangle = \langle \rho^N,\phi(\cdot,0)\rangle &+ \int_0^t \langle \mu_s^N,\frac12 \phi_{xx}(\cdot,s)+\phi_t(\cdot,s)\rangle + M^{N,W}_t \\
        &+ \frac1N\int_0^t \phi(\Theta^N_{I_{\C{N}(s)}}(X^N(s-)),s)-\phi(\Theta^N_{J_{\C{N}(s)}}(X^N(s-)),s)\C{N}(ds)
    \end{align*}
    where $M^{N,W}_t:=\frac1N \sum_{i=1}^N \int_0^t \phi_x(X_i^N(s),s)dW_i(s)$ is a continuous local martingale. Next we prove that
    \begin{align*}
        M^{N,b}_t:=\frac1N \int_0^t \phi(\Theta^N_{I_{\C{N}(s)}}&(X^N(s-)),s)-\phi(\Theta^N_{J_{\C{N}(s)}}(X^N(s-)),s)\C{N}(ds) \\
        &- \int_0^t f(s)\langle \mu_s^N,\phi(\cdot,s)\rangle - \frac{f(s)\langle \mu_s^N,\phi(\cdot,s)\omega(\cdot-b^N(s))\rangle}{\langle \mu_s^N,\omega(\cdot-b^N(s))\rangle}ds
    \end{align*}
    is a local martingale. Certainly if $\C{F}_t:=\sigma(W_i(s),\C{N}(s),I_j,J_j:s\leq t, i\in [N],j\leq \C{N}(t))$ is the natural filtration of the process, then $\B{E}[M^{N,b}_t-M^{N,b}_s|\C{F}_s]=0$. This follows by the independence of $I$ and $J$ from $\C{N}$ so that:
    \begin{align*}
        \B{E}\Bigg[&\frac1N\int_s^t
        \phi(\Theta^N_{I_{\C{N}(u)}}(X^N(u-)),u)-\phi(\Theta^N_{J_{\C{N}(u)}}(X^N(u-)),u)\C{N}(du)|\C{F}_s\Big]\\
        &=\B{E}\Bigg[\frac1N\int_s^t \phi(\Theta^N_{I_{\C{N}(u)}}(X^N(u-)),u)-\phi(\Theta^N_{J_{\C{N}(u)}}(X^N(u-)),u)\C{N}(du)\Bigg]\\
        &=\B{E}\Bigg[\frac1N\sum_{i,j\in[N]}\int_s^t \big(\phi(\Theta^N_i(X^N(u-)),u)-\phi(\Theta^N_j(X^N(u-)),u)\big)\B{P}(I_{\C{N}(u)}=i,J_{\C{N}(u)}=j)\C{N}(du)]\\
        &=\B{E}\Bigg[\frac{1}{N^2}\sum_{i,j\in[N]}\int_s^t \big( \phi(\Theta^N_i(X^N(u-)),u)-\phi(\Theta^N_j(X^N(u-)),u)\big)\frac{\omega(\Theta^N_j(X^N(u-))-b^N(u-))}{\sum_{k=1}^N\omega(\Theta^N_k(X^N(u-))-b^N(u-))}\C{N}(du)\Bigg]\\
        &=\B{E}\Bigg[\int_s^t f(u)\langle \mu_u^N,\phi(\cdot,u)\rangle - \frac{f(u)\langle \mu_u^N,\phi(\cdot,u)\omega(\cdot-b^N(u))\rangle}{\langle \mu_u^N,\omega(\cdot-b^N(u))\rangle}du\Bigg],
    \end{align*}
    where the final equality follows from the fact that $\C{N}(t)-N\int_0^t f(s)ds$ is a martingale (the `compensated Poisson process'). Therefore
    %Since the Poisson process $\C{N}(t)$ has intensity $Nf(t)$, we know that the process $\C{N}(t)-Nf(t)$ is a martingale (known as the compensated Poisson process) and furthermore (by the continuity of $f$), the process 
    %$$\frac{\omega(X_j^N(t-)-b^N(t-))(\phi(X_j^N(t-),t)-\phi(X_i^N(t-),t))}{\sum_{\ell=1}^N \omega(X_\ell^N(t-)-b^N(t-))}$$ 
    %is predictable, therefore:
    %$$M^{N,b}_t := \int_0^t \sum_{1\leq i,j\leq N}\frac{1}{N^2}\frac{\omega(X_j^N(t-)-b^N(t-))(\phi(X_j^N(t-),t)-\phi(X_i^N(t-),t))}{\sum_{\ell=1}^N \omega(X_\ell^N(t-)-b^N(t-))}(\C{N}(dt)-Nf(t)dt)$$ is a martingale. Thus we can write
    \begin{align*}
        \langle \mu_t^N,\phi(\cdot,t)\rangle = \langle \rho^N,&\phi(\cdot,0)\rangle + \int_0^t \langle \mu_s^N,\frac12 \phi_{xx}(\cdot,s)+\phi_t(\cdot,s)\rangle ds + M^{N,W}_t + M^{N,b}_t \\
        &+\int_0^t f(s)\langle \mu_s^N,\phi(\cdot,s)\rangle - \frac{f(s)\langle \mu_s^N,\phi(\cdot,s)\omega(\cdot-b^N(s))\rangle}{\langle \mu_s^N,\omega(\cdot-b^N(s))\rangle} ds 
        %&\int_0^t \sum_{1\leq i,j\leq N}\frac{\Lambda}{N^2} (\phi(X_i^N(s-),s)-\phi(X_j^N(s-),s))\omega(X_j^N(s-)-b^N(s-))ds %\\
        %= \langle \mu_0^N,\phi(\cdot,0)\rangle &+ \int_0^t \langle \mu_s^N,\frac12 \phi_{xx}(\cdot,s)+\phi_t(\cdot,s)\rangle ds + M^{N,W}_t + M^{N,b}_t \\
        %&+ \int_0^t \langle \mu_s^N,\phi(\cdot,s)\rangle \langle \mu_s^N,\omega(\cdot-b^N(s))\rangle - \langle \mu_s^N,\phi(\cdot,s)\omega(\cdot-b^N(s))\rangle ds \\
        %= \langle \mu_0^N,\phi(\cdot,0)\rangle &+ \int_0^t \mu_s^N,\frac12 \phi_{xx}(\cdot,s)+\phi_t(\cdot,s)\rangle +  \langle \mu_s^N,\phi(\cdot,s)\rangle f(s) - \langle \mu_s^N,\phi(\cdot,s)\omega(\cdot-b^N(s))\rangle ds\\
        %&+ M^{N,W}_t + M^{N,b}_t.
    \end{align*}
    It just remains to prove that $\B{E}[[M^{N,W}]_t]$ and $\B{E}[[M^{N,b}]_t]$ converge to $0$ as $N\to\infty$. For fixed $t\geq 0$, It\^{o}'s isometry gives:
    $$\B{E}\left[ \frac1N \sum_{i=1}^N \int_0^t \phi(X_i^N(s),s)dW_i(s)\right]_t = \frac{1}{N^2}\sum_{i=1}^N \int_0^t \phi_x(X_i^N(s),s)^2 d[W_i]_s = \frac1N \int_0^t \langle \mu_s^N, \phi_x(\cdot,s)^2\rangle ds.$$
    Thus since $\phi_x$ is bounded, this quantity converges to $0$ as $N\to\infty$. We also calculate that:
    \begin{align*}
        \B{E}&[[M^{N,b}]_t] \\
        &=   \B{E}\left[\int_0^t \frac{1}{N^2}\sum_{i,j\in[N]}\frac{\omega(X_j^N(s-)-b^N(s-))(\phi(X_j^N(s-),s)-\phi(X_i^N(s-),s))}{\sum_{k=1}^N \omega(\Theta^N_k(X^N(s-))-b^N(s-))}(\C{N}(ds)-Nf(s)ds)\right]_t \\
        &= \int_0^t \left(\frac{1}{N^2}\sum_{i,j\in[N]}\frac{\omega(X_j^N(s-)-b^N(s-))(\phi(X_j^N(s-),s)-\phi(X_i^N(s-),s))}{\sum_{k=1}^N\omega(\Theta^N_k(X^N(s-))-b^N(s-))}\right)^2 d[\C{N}(\cdot)-Nf(\cdot)]_s
    \end{align*}
    Now $d[\C{N}(\cdot)-Nf(\cdot)]_s=Nf(s)ds$, and since $\phi$ is bounded, say by $\Phi$, we have 
    \begin{align*}
        \Bigg(\frac{1}{N^2}\sum_{i,j\in[N]}&\frac{\omega(X_j^N(s-)-b^N(s-))(\phi(X_j^N(s-),s)-\phi(X_i^N(s-),s))}{\sum_{k=1}^N\omega(\Theta^N_k(X^N(s-))-b^N(s-))}\Bigg)^2 \\
        &\leq \left( \sum_{i \in [N]} \frac{2\Phi}{N^2}\frac{\sum_{j=1}^N\omega(X_j^N(s-)-b^N(s-))}{\sum_{k=1}^N \omega(X_k^N(s-)-b^N(s-))}\right)^2 = \left(\sum_{i=1}^N \frac{2\Phi}{N^2}\right)^2 = \frac{4\Phi^2}{N^2},
    \end{align*}
    therefore, since $f$ is bounded by $\Lambda$,
    $$\B{E}[M^{N,b}]_t \leq \int_0^t \frac{4\Phi^2}{N}f(s)ds \leq \frac{4\Lambda\Phi^2 t}{N} \xrightarrow[N\to\infty]{} 0,$$
    thus concluding the proof. 
\end{proof}

Since we seek the limit of the sequence $((\mu_t^N)_{t\geq 0})_{N=1,2,\ldots}$, we then need to show that the sequence of laws is tight, which is the substance of the next theorem. This proposition follows the structure of Proposition 5, \cite{me3}. In particular, we show tightness in the space $\C{P}(\C{D}([0,T],\C{M}_F^w(\bar{\B{R}})))$, where $\C{M}_F^w(\bar{\B{R}})$ is the space of finite measures on the extended real line under the weak topology. We do this as it will help to prove a compact containment condition. %After proving tightness, we will subsequently prove that any limit of the sequence, $\mu_t^\infty$, has no mass on $\bar{\B{R}}\setminus \B{R}

\begin{propn} \label{tightness}
    Fix $T>0$, and let $Q^N_T$ denote the law of $(\mu_t^N)_{t\in [0,T]}$. Then the sequence $(Q^N_T)_{N=1,2,\ldots}$ is tight in $\C{P}(\C{D}([0,T],\C{M}^w_F(\bar{\B{R}})))$ with respect to the Skorokhod topology. 
\end{propn}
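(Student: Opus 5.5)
We will use the standard criterion for tightness of measure-valued processes (Jakubowski's criterion, or Theorem 3.7.1 together with the Aldous–Rebolledo criterion as in Roelly-Coppoletta / Perkins). Since $\bar{\B{R}}$ is compact, $\C{M}_F^w(\bar{\B{R}})$ restricted to probability measures is compact. The processes $\mu^N_t$ are probability measures on $\B{R}\subset\bar{\B{R}}$ for every $t$, so the compact containment condition is automatic: every $\mu_t^N$ lies in the compact set $\{\mu\in\C{M}_F(\bar{\B{R}}):\mu(\bar{\B{R}})=1\}$. This is exactly the reason for working on $\bar{\B{R}}$. It therefore suffices to show that, for each $\phi$ in a countable family of functions in $C(\bar{\B{R}})$ that is dense and closed under addition, the real-valued processes $(\langle\mu_t^N,\phi\rangle)_{t\in[0,T]}$ are tight in $\C{D}([0,T],\B{R})$. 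A convenient choice is smooth functions with $\phi,\phi',\phi''$ bounded and continuous that have limits at $\pm\infty$. Such functions extend continuously to $\bar{\B{R}}$, form a dense subalgebra (Stone–Weierstrass), and lie in $\C{BC}^{2,1}$ when taken constant in time.

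For such a time-independent $\phi$ we apply Proposition \ref{probRepPropn}. It gives $\langle\mu_t^N,\phi\rangle=\langle\rho^N,\phi\rangle+A^N_t+M^{N,W}_t+M^{N,b}_t$, where
\[
A^N_t=\int_0^t \langle\mu^N_s,\tfrac12\phi''+f(s)\phi\rangle-\frac{f(s)\langle\mu^N_s,\phi\,\omega(\cdot-b^N(s))\rangle}{\langle\mu^N_s,\omega(\cdot-b^N(s))\rangle}\,ds .
\]
We then check the Aldous–Rebolledo conditions.

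\textbf{Pointwise tightness.} $|\langle\mu^N_t,\phi\rangle|\le\|\phi\|_\infty$, so the one-dimensional marginals are trivially tight.

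\textbf{Aldous condition.} Let $\tau_N$ be stopping times bounded by $T$ and $\delta_N\to0$. The ratio term in $A^N$ is a weighted average of $\phi$, so it is bounded by $\|\phi\|_\infty$. The integrand of $A^N$ is therefore bounded by $\tfrac12\|\phi''\|_\infty+2\Lambda\|\phi\|_\infty$, and hence $|A^N_{\tau_N+\delta_N}-A^N_{\tau_N}|\le C\delta_N\to0$ deterministically. For the martingale parts we use the quadratic-variation estimates from the proof of Proposition \ref{probRepPropn}, applied between stopping times. By optional stopping,
\[
\B{E}\big[(M_{\tau_N+\delta_N}-M_{\tau_N})^2\big]=\B{E}\big[[M]_{\tau_N+\delta_N}-[M]_{\tau_N}\big]\le C\,\delta_N/N+C\,T/N\to0,
\]
using the bounds $\frac1N\|\phi'\|^2_\infty$ per unit time for $M^{N,W}$ and $4\Lambda\Phi^2/N$ per unit time for $M^{N,b}$. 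Chebyshev's inequality then gives convergence in probability to $0$. The bounded jumps also make the local martingales true martingales.

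The main obstacle is the compact containment / choice-of-topology issue. Working on $\B{R}$ would require uniform control of tails: mass could escape to $\pm\infty$ through Brownian spreading combined with selection. Passing to $\bar{\B{R}}$ removes this at the cost of needing test functions continuous on $\bar{\B{R}}$. The step to verify carefully is that this class is rich enough (separating and closed under addition) to invoke Jakubowski's theorem in $\C{D}([0,T],\C{M}^w_F(\bar{\B{R}}))$. Any mass at $\pm\infty$ in limits is left to be excluded later.
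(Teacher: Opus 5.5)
Your proposal is correct and follows essentially the same argument as the paper. Compact containment is automatic because each $\mu_t^N$ is a probability measure on the compact space $\bar{\B{R}}$, and tightness of $\langle\mu_t^N,\phi\rangle$ follows from Aldous's criterion via the uniform bound, the $O(\delta)$ drift bound and the $O(1/N)$ quadratic-variation bounds of Proposition \ref{probRepPropn}. Two small differences from the paper:
\begin{itemize}
\item You use optional stopping where the paper uses Burkholder--Davis--Gundy. The true-martingale property comes from $\B{E}[[M]_T]<\infty$, not from bounded jumps alone.
\item Your test class of smooth functions with limits at $\pm\infty$ genuinely lies in $C(\bar{\B{R}})$ and is dense there by Stone--Weierstrass. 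That is slightly more careful than the paper's appeal to $\C{BC}^{2,1}\cap C_b(\B{R})$.
\end{itemize}
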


\begin{proof}
    By Theorem 1.18, \cite{etheridge}, in order to show tightness, it is sufficient to show the following conditions:
    \begin{enumerate}[noitemsep]
        \item[(i)] \textit{Compact containment:} For all $\epsilon > 0$ there exists a compact subset $K_\epsilon\subseteq \C{M}_F^w(\bar{\B{R}})$ such that $$\inf_{N\in \B{N}} \B{P}(\mu_t^N \in K_\epsilon\;  \forall \, t\in [0,T])>1-\epsilon.$$
        \item[(ii)] \textit{Tightness of real-valued processes:} The sequence of laws of $(\langle \mu_t^N,f\rangle)_{0\leq t\leq T}$ is tight in $\C{P}(\C{D}([0,T],\B{R}))$ for every function $f$ in a dense subset of $C_b(\bar{\B{R}})$.
    \end{enumerate}
    Since the set $\{\mu:\mu(\bar{\B{R}})=1\}$ is a compact subset of $\C{M}_F^w(\bar{\B{R}})$ and $\mu_t^N(\bar{\B{R}})=1$ for all $t\geq 0$ and $N\in \B{N}$, thus condition (i) holds immediately. Next we show condition (ii). Since each $\mu_t^N$ is supported on $\B{R}$, condition (ii) is equivalent to showing tightness of the sequence of laws of $(\langle \mu_t^N,f\rangle )_{t\in [0,T]}$ for all $f$ in a dense subset of $C_b(\B{R})$; we will take $\C{BC}^{2,1}(\B{R}\times [0,\infty),\B{R})\cap C_b(\B{R})$ for our dense subset. %, since By Theorem 2.1 of \cite{roellyCop}, all that is required to show tightness in $\C{D}([0,T],\C{M}_F(\B{R}))$ is to show that for every $\phi$ in a dense subset of $C_0(\B{R})$, $((\langle \mu_t^N, \phi\rangle)_{t\in [0,T]})_{N=1,2,\ldots}$ is tight in $\C{D}([0,T],\B{R})$. Note that $\C{BC}^{2,1}(\B{R}\times [0,\infty),\B{R})\cap C_0(\B{R})$ is a dense subset of $C_0(\B{R})$, so fix $\phi\in \C{BC}^{2,1}(\B{R}\times [0,\infty),\B{R})$ which vanishes at $\pm \infty$. Note that this is a dense subset of $C_0(\B{R})$. 
    By Aldous' tightness criterion (see, for example, Theorem 16.10, \cite{billingsley}), the sequence of laws of $((\langle \mu_t^N,\phi\rangle)_{t\in [0,T]})_{N=1,2,\ldots}$ is tight if:
    \begin{enumerate}[noitemsep]
        \item[A] For every $m>0$, $\lim_{a\to\infty}\limsup_{N\to\infty}\B{P}(\sup_{0\leq t\leq m}|\langle \mu_t^N,\phi\rangle|\geq a)=0.$
        \item[B] For each $\epsilon, \eta, m>0$, there exists $\delta_0,N_0$ such that if $\delta\leq \delta_0$ and $N\geq N_0$ and $\tau$ is a stopping time $\tau\leq m$ then $\B{P}(|\langle \mu_{\tau+\delta}^N,\phi\rangle - \langle \mu_\tau^N,\phi\rangle |\geq \epsilon)\leq \eta$. 
    \end{enumerate}
    Since $|\langle \mu_t^N,\phi\rangle|\leq \|\phi\|_\infty$, condition A follows immediately. For condition B, note that by \eqref{probRepIto} and the boundedness of $f$, $\phi$, $\phi_t$, $\phi_{xx}$, $\omega$, and $\Lambda$, there exists constant $\kappa$ such that 
    \begin{align*}
        |\langle \mu_{\tau+\delta}^N,\phi\rangle - \langle \mu_\tau^N,\phi\rangle |\leq \kappa\delta + |M^{N,W}_{\tau+\delta}-M^{N,W}_{\tau}|+|M^{N,b}_{\tau+\delta}-M^{N,b}_{\tau}|,
    \end{align*}
    therefore by the Markov inequality and the Burkholder-Davis-Gundy inequality, there exists a constant $\hat{\kappa}$
    \begin{align*}
        \B{P}(|\langle \mu_{\tau+\delta}^N,\phi,\rangle-\langle \mu_\tau^N,\phi\rangle|\geq \epsilon)&\leq \is_{\kappa\delta\geq \epsilon/3}+\B{P}(|M_{\tau+\delta}^{N,W}-M^{N,W}_\tau|^2\geq \epsilon^2/9)+\B{P}(|M_{\tau+\delta}^{N,b}-M_{\tau}^{N,b}|^2\geq \epsilon^2/9)\\
        &\leq \is_{\kappa\delta\geq \epsilon/3}+\frac{9}{\epsilon^2}\B{E}[|M_{\tau+\delta}^{N,W}-M^{N,W}_\tau|^2]+\frac{9}{\epsilon^2}\B{E}[|M_{\tau+\delta}^{N,b}-M^{N,b}_\tau|^2]\\
        &\leq \is_{\kappa\delta \geq \epsilon/3}+\frac{9\hat{\kappa}}{\epsilon^2}(\B{E}[[M^{N,W}]_\delta]+\B{E}[[M^{N,b}]_\delta])\xrightarrow[\delta\to 0,N\to\infty]{}0.
    \end{align*}
    This shows condition B, and therefore $((\langle \mu_t^N,\phi\rangle)_{t\in [0,T]})_{N=1,2,\ldots}$ is tight in $\C{D}([0,T],\B{R})$. Hence condition (ii) holds, and thus the sequence $(Q^N_T)_{N=1,2,\ldots}$ is tight in $\C{D}([0,T],\C{M}^w_F(\bar{\B{R}}))$.
\end{proof}

We need not worry about the generalisation from $\C{M}_F^w(\B{R})$ to $\C{M}_F^w(\bar{\B{R}})$ since the next theorem shows that if $Q^\infty_T$ is a subsequential limit of $(Q^N_T)_{N\geq 1}$ then $Q^\infty_T$-almost all measure-valued processes have no mass on $\{\pm\infty\}$ for any $t\in [0,T]$. That is, if $(\mu_t)_{t\in [0,T]}\sim Q^\infty_T$, then $\mu:[0,T]\mapsto \C{M}_F^w(\B{R})$ almost surely. It also shows that $\mu_t$ has a density for all $t\in [0,T]$ $Q^\infty_T$-almost surely.

\begin{propn}
    Let $Q^\infty_T$ be a subsequential limit of $(Q^N_T)_{N\geq 1}$ and let $(\mu_t)_{t\in [0,T]}\sim Q^\infty_T$. Then $\mu_t\ll\lambda$ and $\mu_t(\{-\infty,\infty\})=0$ for all $t\in [0,T]$ $Q_T^\infty$-a.s. 
\end{propn}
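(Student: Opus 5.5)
Both claims will follow from a domination argument: the $N$-particle empirical measure sits below the empirical measure of a system with no deletions, and that system has an explicit, absolutely continuous limit. The key observation is that deletions only remove mass and branching occurs at rate at most $\|f\|_\infty\le\Lambda$. So for any nonnegative $\phi\in C_b(\B{R})$, the representation \eqref{probRepIto} of Proposition \ref{probRepPropn} gives
\begin{align*}
\B{E}\langle \mu_t^N,\phi\rangle \le \langle \rho^N, P_t\phi\rangle e^{\int_0^t f(s)ds},
\end{align*}
where $P_t$ is the heat semigroup.

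To see this, I apply Proposition \ref{probRepPropn} with the test function $\psi(x,s)=P_{t-s}\phi(x)e^{\int_s^t f}$, after a standard mollification in time so that $\psi\in\C{BC}^{2,1}$. With this choice, $\frac12\psi_{xx}+\psi_t+f\psi=0$. The deletion term $-f\langle\mu_s^N,\psi\,\omega\rangle/\langle\mu^N_s,\omega\rangle$ is nonpositive because $\psi\ge0$. Taking expectations, and localising the local martingales (they have bounded jumps and bounded quadratic variation, so they are true martingales), yields the bound above. The same inequality holds with $\phi=\is_A$ for bounded open $A$ by monotone approximation.

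Next I pass to the limit. Weak convergence $\rho^N\to\rho$ together with continuity of $P_t\phi$ gives
\begin{align*}
\B{E}^{Q^\infty_T}\langle\mu_t,\phi\rangle\le e^{\|f\|_\infty t}\langle\rho,P_t\phi\rangle
\end{align*}
for $\phi\in C_b(\bar{\B{R}})$ nonnegative. To justify the limit I use that the map $\mu\mapsto\langle\mu_t,\phi\rangle$ is bounded and continuous at continuity times of the Skorokhod path. These times are all but countably many, and right-continuity extends the bound to every $t$. The function $\phi$ on $\bar{\B{R}}$ should be extended by its limits at $\pm\infty$.

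From this single inequality both conclusions follow.

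\emph{Absolute continuity.} For a Lebesgue-null set $A$, approximate $\is_A$ from above by continuous functions. Since $\rho P_t$ has a density, $\langle\rho,P_t\is_A\rangle=0$, so $\B{E}\mu_t(A)=0$. To get this simultaneously for all $t$ and all null sets, I would instead bound the expected total mass of the time-integrated measure, or argue on rationals together with right-continuity. The cleaner route is: $\B{E}\mu_t\le e^{Ct}\rho P_t$ as measures, so $\mu_t\ll\lambda$ almost surely for each fixed $t$.

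\emph{No mass at infinity.} Take $\phi_K$ continuous on $\bar{\B{R}}$, equal to $1$ outside $[-K-1,K+1]$ and $0$ on $[-K,K]$. The bound gives $\B{E}\mu_t(\{\pm\infty\})\le e^{Ct}\rho P_t([-K,K]^c)$, which tends to $0$ as $K\to\infty$, uniformly in $t\in[0,T]$.

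The main obstacle is upgrading from ``for each $t$ a.s.'' to ``a.s. for all $t$'', since weak right-continuity does not preserve absolute continuity or the absence of mass at infinity. For the mass at infinity I would use $\B{E}\sup_t$ of the submartingale-type quantity: the positive part of the dynamics is controlled via Doob's inequality applied to $\langle\mu^N_t,\psi\rangle$ plus its compensator, giving $\B{E}\sup_{t\le T}\mu_t([-K,K]^c)\to0$. For absolute continuity for all $t$, I would show that $\B{E}\sup_t\langle\mu_t,\phi\rangle\le C\|\phi\|_{L^p}$-type bounds fail in general. I would therefore fall back on proving density for all rational $t$ and then use the PDE weak formulation (the Duhamel representation $\mu_t=\mu_s P_{t-s}+\cdots$ established later), whose right-hand side is absolutely continuous for every $t>s$.
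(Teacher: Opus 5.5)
\textbf{There is a genuine gap: your first-moment bound does not give almost-sure absolute continuity.} The inequality $\B{E}\langle\mu_t,\phi\rangle\le e^{\int_0^t f}\langle\rho,P_t\phi\rangle$ controls only the intensity measure $\B{E}\mu_t$. The inference ``$\B{E}\mu_t\le e^{Ct}\rho P_t$ as measures, so $\mu_t\ll\lambda$ almost surely'' is false in general. If $X$ is uniform on $[0,1]$, the random measure $\delta_X$ has intensity $\lambda|_{[0,1]}$, yet it is singular with probability one. Your argument does show $\mu_t(A)=0$ a.s.\ for each \emph{fixed} null set $A$. But the exceptional event depends on $A$, there are uncountably many null sets, and the singular part of $\mu_t$ can sit on a random null set. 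The mass-at-infinity claim is fine for fixed $t$, since it concerns a single set; there only the uniformity in $t$ is missing, and you leave that as a Doob-type sketch.

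Your fallback for ``all $t$'' is circular. The Duhamel form comes from Proposition \ref{weakRepPropn}, which relies on Lemma \ref{bConvLemma}. That lemma assumes $\mu_s\ll\lambda$, and essentially needs it when $\omega=\is_{x\le0}$ (continuity of the distribution function, and $\mu_s((b^{N_k}(s),b^\star(s)])\to0$). Proposition \ref{weakRepPropn} also uses $\nu_s\ll\lambda$ to replace the ratio by $\Lambda\langle\nu_s,\phi\,\omega(\cdot-b^\nu(s))\rangle$. So the weak formulation cannot be used to prove absolute continuity.

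\textbf{What is needed is an almost-sure domination by a deterministic absolutely continuous measure.} The paper gets this by coupling. The $(\omega,f,\Lambda,N)$-BBM is the blue subpopulation of a BBM with branching rate $f$, so $\mu^N_t\le\hat\mu^N_t$ path by path. The BBM measure $\hat\mu^N$ converges to the deterministic measure with density $e^{\int_0^t f}\,\partial_x\B{P}_\rho(B_t\le x)$. The domination survives the limit pathwise, giving both conclusions for all $t$ at once.

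Your own computation can be repaired the same way: do not take expectations. With your nonnegative $\psi$, \eqref{probRepIto} gives the pathwise bound $\langle\mu^N_t,\phi\rangle\le e^{\int_0^t f}\langle\rho^N,P_t\phi\rangle+M^{N,W}_t+M^{N,b}_t$. The martingale terms vanish in $L^2$, so $\langle\mu_t,\phi\rangle\le e^{\int_0^t f}\langle\rho,P_t\phi\rangle$ almost surely, a deterministic bound.

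You need $\phi\in C^2_b$ here, not merely continuous. Otherwise $\partial_x P_{t-s}\phi$ blows up like $(t-s)^{-1/2}$, $\psi\notin\C{BC}^{2,1}$, and the quadratic-variation bound fails.

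Take a countable family of such $\phi$ that determines measures on $\bar{\B{R}}$, including the $\phi_K$ vanishing on $[-K,K]$, and a countable dense set of times. Then right-continuity of the paths (or Proposition \ref{vagueContinuity}, whose proof does not use absolute continuity) gives $\mu_t\le e^{\int_0^t f}\rho P_t$ for every $t\in[0,T]$ simultaneously. This yields both claims and makes the Doob and Duhamel detours unnecessary.
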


\begin{proof}
    Consider a branching Brownian motion (BBM) $(X_u(t):u\in\C{U}(t))_{t\geq 0}$ starting from initial configuration $\rho^N$ and branching at the time-dependent rate $f$. Let $\C{U}(t)$ denote the set of particles at time $t$ and $\C{U}^i(t)$ denote the subset of $\C{U}(t)$ consisting of the descendants of the particle which starts at the location of the $i$\textsuperscript{th} leftmost particle of $\rho^N$. We will colour the particles of the BBM blue and red, and at each branching event, a particle will branch into two particles of like colour. 
    Initially colour all particles blue. Then subsequently, at each branching time $\tau_k$ of a blue particle, colour exactly one blue particle red, choosing the $j$\textsuperscript{th} leftmost blue particle with probability $\frac{\omega(\Theta^N_j(X^N(\tau_k-))-b^N(\tau_k-))}{\sum_{k=1}^N\omega(\Theta^N_k(X^N(\tau_k-))-b^N(\tau_k-))}$. By construction, the subset of blue particles describes a $(\omega,f,\Lambda,N)$-BBM process. Let $\hat{\mu}^N(t)=\frac1N\sum_{u\in\C{U}(t)}\delta_{X_u(t)}$ be the rescaled empirical measure of this coloured BBM at time $t$. By the above coupling, it is clear that $\mu_t^N(A)\leq \hat{\mu}_t^N(A)$ for any measurable set $A$. It is well known (see Appendix of \cite{me3} for a proof of the exact statement) that if $\hat{Q}^N_T$ is the law of $(\hat{\mu}^N_t)_{t\in [0,T]}$, then $\hat{Q}^N_T$ converges weakly to $\hat{Q}^\infty_T$, and if $(\hat{\mu}_t)_{t\in [0,T]}\sim \hat{Q}^\infty_T$, then $\hat{\mu}_t(dx)=u(x,t)dx$ $\hat{Q}^\infty_T$-a.s., where $u(x,t)=\exp(\int_0^t f(s)ds)\frac{\partial}{\partial x}\B{P}_\rho(B(t)\leq x)$ for Brownian motion $B$ with initial distribution $\rho$. In particular, $u$ is the unique classical solution to the PDE $u_t = \frac12 u_{xx}+f(t)u$. Then since $\hat{\mu}_t\ll \lambda$ and $\hat{\mu}_t(\{-\infty,\infty\})=0$, and $\hat{\mu}^N_t$ dominates $\mu_t^N$ for all $N$, thus $\mu_t\ll \lambda$ and $\mu_t(\{-\infty,\infty\})=0$ for all $t\in [0,T]$. 
\end{proof}

\begin{propn} \label{vagueContinuity}
    Let $Q^\infty_T$ be a subsequential limit of $(Q^N_T)_{N=1,2,\ldots}$ and let $(\mu_t)_{t\in [0,T]}\sim Q^\infty_T$. Then the map $t\mapsto \mu_t$ is continuous on $[0,T]$ with respect to the weak topology $Q^\infty_T$-a.s.
\end{propn}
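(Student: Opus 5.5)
The plan is to show that the limit process has no jumps. We do this by showing that the maximal jump of the prelimit processes $\langle \mu_t^N,\phi\rangle$ tends to zero, for $\phi$ ranging over a countable convergence-determining family. We then use the standard fact that if $X^N\Rightarrow X$ in the Skorokhod space $\C{D}([0,T],E)$ and $\sup_{t\le T} d(X^N_t,X^N_{t-})\to 0$ in probability, then $X$ is a.s. continuous.

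\textbf{Step 1: bounding the jumps.} Fix a countable family $\{\phi_k\}\subseteq \C{BC}^{2,1}\cap C_b(\bar{\B{R}})$ that is dense in $C_b(\bar{\B{R}})$ (for instance, smooth functions with limits at $\pm\infty$). The Brownian parts are continuous, so the jumps of $t\mapsto\langle\mu_t^N,\phi_k\rangle$ occur only at the branching times $\tau_i$. At such a time the jump is
\begin{align*}
\frac1N\left(\phi_k(\Theta^N_{I}(X^N(\tau_i-)))-\phi_k(\Theta^N_{J}(X^N(\tau_i-)))\right),
\end{align*}
which is bounded in absolute value by $2\|\phi_k\|_\infty/N$. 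This bound holds deterministically, for every path. Metrise the weak topology on $\C{M}_F^w(\bar{\B{R}})$ by
\begin{align*}
d(\mu,\nu)=\sum_k 2^{-k}\left(|\langle\mu-\nu,\phi_k\rangle|\wedge 1\right).
\end{align*}
Truncating the sum at a large index $K$ shows that $\sup_{t\le T} d(\mu^N_t,\mu^N_{t-})\le \sum_{k\le K}2^{-k}\cdot 2\|\phi_k\|_\infty/N+2^{-K}\to 0$ deterministically.

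\textbf{Step 2: passing to the limit.} The map $x\mapsto \sup_{t\le T} d(x(t),x(t-))\wedge 1$ on $\C{D}([0,T],\C{M}^w_F(\bar{\B{R}}))$ is not continuous in the Skorokhod topology, but the standard result (e.g.\ Ethier--Kurtz, Theorem 3.10.2) still applies: if $J(X^N)\to 0$ in probability along the subsequence converging to $Q^\infty_T$, then $Q^\infty_T$ is supported on $C([0,T],\C{M}^w_F(\bar{\B{R}}))$. Alternatively, one can argue directly through Skorokhod representation. Take a.s.\ convergent versions. A jump of size $\epsilon$ of the limit at some time $t$ forces, for large $N$, a jump of size at least $\epsilon/2$ of $\mu^N$ near $t$, because Skorokhod convergence matches jumps. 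This contradicts Step 1.

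\textbf{Step 3: transferring to $\B{R}$.} By the preceding proposition, $\mu_t(\{\pm\infty\})=0$ for all $t$ a.s. Continuity in the weak topology on $\bar{\B{R}}$ then implies continuity in the weak topology on $\B{R}$: for $g\in C_b(\B{R})$ one can approximate by functions continuous on $\bar{\B{R}}$, using tightness of $\{\mu_t:t\in[0,T]\}$. That tightness follows from continuity into the compact space of probability measures on $\bar{\B{R}}$ together with the absence of mass at $\pm\infty$. The main subtlety is this last step. The cleaner route, which I would take, is to note that $\{\mu_t\}_{t\le T}$ is compact in $\C{P}(\bar{\B{R}})$ as a continuous image of $[0,T]$. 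Each element puts no mass at infinity, and a compact set of such measures is uniformly tight in $\B{R}$, via a Dini-type argument on $\mu_t([-n,n]^c)$, which is upper semicontinuous in $t$. Uniform tightness then lets one upgrade continuity to the weak topology on $\B{R}$. Everything else is routine, since the jump bound $2\|\phi\|_\infty/N$ is deterministic.
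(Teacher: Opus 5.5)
Your proof is correct, but it takes a different route from the paper. The paper uses the Kolmogorov continuity criterion. From the decomposition in Proposition \ref{probRepPropn}, it bounds the second moment of $\langle \phi,\mu^N_{\ell_N(s)}\rangle-\langle\phi,\mu^N_{\ell_N(t)}\rangle$ by a drift term of order $|s-t|^2$ plus martingale terms that vanish as $N\to\infty$. It passes to the limit with Fatou along the Skorokhod time changes to get $\B{E}[|\langle\phi,\mu_s\rangle-\langle\phi,\mu_t\rangle|^2]\le 9\kappa|s-t|^2$. Finally it uses that a c\`adl\`ag process with a continuous modification is continuous.

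You use only the pathwise fact that every jump of $\mu^N$ has the form $\frac1N(\delta_x-\delta_y)$, together with the vanishing-jumps criterion. This needs no moment or martingale estimates. It also avoids points the paper leaves implicit:
\begin{itemize}
\item the almost sure convergence inside Fatou's lemma needs a Skorokhod representation;
\item the martingale increments are taken at the random times $\ell_N(s)$;
\item the last step appeals to a ``countable dense subset of $C_b(\B{R})$'', which does not exist in the sup norm.
\end{itemize}
Working with a countable dense family in the separable space $C(\bar{\B{R}})$ and then using $\mu_t(\{\pm\infty\})=0$ is the right way to handle the last point. In exchange, the paper's route gives a quantitative modulus of continuity for each $t\mapsto\langle\phi,\mu_t\rangle$.

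Three small corrections:
\begin{itemize}
\item On $\C{D}([0,T],\cdot)$, time changes fix $0$ and $T$, so the maximal-jump functional $x\mapsto\sup_{t\le T}d(x(t),x(t-))$ is in fact continuous. Your jump-matching argument shows it changes by at most $2\sup_t d(x_n(\ell_n(t)),x(t))$. The continuous mapping theorem therefore suffices, and no external theorem is needed.
\item $t\mapsto \mu_t([-n,n]^c)$ is lower, not upper, semicontinuous, because $[-n,n]^c$ is open in $\bar{\B{R}}$. The Dini argument should use the closed sets $[-\infty,-n]\cup[n,\infty]$.
\item Step 3 does not need uniform tightness. If $\mu_{t_m}\to\mu_t$ weakly on $\bar{\B{R}}$ and $\mu_t(\{\pm\infty\})=0$, then for $F$ closed in $\B{R}$ the portmanteau theorem gives $\limsup_m\mu_{t_m}(F)\le\mu_t(\bar F)=\mu_t(F)$, where $\bar F$ is the closure in $\bar{\B{R}}$. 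This is already weak convergence on $\B{R}$.
\end{itemize}
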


\begin{proof}
    Recall the Kolmogorov continuity theorem, which states that if $X:[0,\infty)\times \Omega\to S$ is a stochastic process and $(S,d)$ a metric space, and there exist positive $\alpha,\beta,K$ such that $\B{E}[d(X_s,X_t)^\alpha]\leq K|t-s|^{1+\beta}$ for all $s,t$, then there exists a modification of $X$ which is continuous. So fix $s,t$ and $\phi\in \C{BC}^{2,1}(\B{R}\times [0,\infty),\B{R})$. Since $Q^N_T$ converges to $Q_T^\infty$ in the Skorokhod topology, there exists a sequence $\ell_N:[0,T]\to [0,T]$ such that $\sup_{t\in [0,T]}|\ell_N(t)-t|$ and $\sup_{t\in [0,T]}|\langle \phi,\lambda^N_{\ell_N(t)}\rangle-\langle \phi,\mu_t\rangle|$ converge to $0$ as $N\to\infty$. Then by Fatou's lemma:
    $$\B{E}[|\langle \phi,\mu_s\rangle - \langle \phi,\mu_t\rangle|^2]=\B{E}[\lim_{N\to\infty}|\langle \phi,\mu^N_{\ell_N(s)}\rangle - \langle \phi,\mu^N_{\ell_N(t)}\rangle|^2]\leq \liminf_{N\to\infty}\B{E}[|\langle \phi,\mu_{\ell_N(s)}^N\rangle - \langle \phi,\mu^N_{\ell_N(t)}\rangle |^2].$$
    Then as $\omega$, $\Lambda$, $f$, $\phi$, $\phi_t$, and $\phi_{xx}$ are bounded, and $\langle\mu_s^N,1\rangle\equiv 1$, thus \eqref{probRepIto} yields that there exists a constant $\kappa$ such that
    \begin{align*}
        \B{E}[|\langle \phi,\mu_{\ell_N(s)}^N\rangle - \langle \phi, \mu_{\ell_N(t)}^N\rangle|^2]\leq 9\left(\kappa|\ell_N(s)-\ell_N(t)|^2+\B{E}[|M^{N,W}_{\ell_N(s)}-M^{N,W}_{\ell_N(t)}|^2]+\B{E}[|M^{N,b}_{\ell_N(s)}-M^{N,b}_{\ell_N(t)}|^2]\right).
    \end{align*}
    By the Bukrholder-Davis-Gundy inequality and Proposition \ref{probRepPropn}, $\B{E}[|M^{N,W}_{\ell_N(s)}-M^{N,W}_{\ell_N(t)}|^2]$ and $\B{E}[|M^{N,b}_{\ell_N(s)}-M^{N,b}_{\ell_N(t)}|^2]$ converge to $0$ as $N\to\infty$. Moreover, as $\sup_{t\in [0,T]}|\ell_N(t)-t|\xrightarrow[N\to\infty]{}0$, thus
    \begin{align*}\liminf_{N\to\infty}\B{E}[|\langle \phi,\mu_{\ell_N(s)}^N\rangle - \langle \phi, \mu_{\ell_N(t)}^N\rangle|^2]\leq 9\kappa|s-t|^2.\end{align*}
    Therefore by the Kolmogorov continuity theorem, $t\mapsto\langle \phi,\mu_s\rangle$ has a continuous modification on $[0,T]$, and thus, since it is a cadlag process, is almost surely continuous on $[0,T]$ (see Theorem 1, \cite{schilling}). Therefore $t\mapsto\langle \phi,\mu_t\rangle$ is continuous for all $\phi$ in a countable dense subset of $C_b(\B{R})$ and therefore $(\mu_s)_{t\in [0,T]}$ is almost surely continuous with respect to the weak topology. 
\end{proof}

\begin{lemma} \label{bConvLemma}
    Suppose that the measure $\mu^N_s$ converges weakly to the measure $\mu_s\ll\lambda$ almost surely and $b(s):=\inf\{x:\Lambda\langle \mu_s,\omega(\cdot-x)\rangle \geq f(s)\}$. If either $\omega$ is continuous or $\omega = \is_{\{x\leq 0\}}$, then for any $\phi\in C_b(\B{R})$, %$\langle \mu_s^N,\omega(\cdot-b^N(s))\rangle\to \langle \mu_s,\omega(\cdot-b(s))\rangle$ and 
    $\langle \mu^N_s,\phi(\cdot)\omega(\cdot-b^N(s))\rangle\to\langle \mu_s, \phi(\cdot)\omega(\cdot-b(s))\rangle$ almost surely. 
\end{lemma}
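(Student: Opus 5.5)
The plan has two steps: first show that $b^N(s)\to b(s)$, then pass to the limit in the integrand using weak convergence together with $\mu_s\ll\lambda$. Throughout, fix $s$ and work on the almost sure event where $\mu_s^N\Rightarrow\mu_s$. Define
\[
F_N(x):=\Lambda\langle \mu_s^N,\omega(\cdot-x)\rangle, \qquad F(x):=\Lambda\langle\mu_s,\omega(\cdot-x)\rangle .
\]
Both are nondecreasing in $x$, since $\omega$ is decreasing. We have $F(x)\to 0$ as $x\to-\infty$ and $F(x)\to\Lambda>f(s)$ as $x\to\infty$.

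\textbf{Convergence of $F_N$.} I claim $F_N(x)\to F(x)$ for every $x$.
\begin{itemize}
\item If $\omega$ is continuous, $\omega(\cdot-x)$ is bounded and continuous, so this is immediate from weak convergence.
\item If $\omega=\is_{\{x\le 0\}}$, then $F_N(x)=\Lambda\mu_s^N((-\infty,x])$. This converges because $\mu_s$ has no atoms (as $\mu_s\ll\lambda$), so every point is a continuity point.
\end{itemize}
Moreover $F$ is continuous in $x$. In the indicator case this follows from $\mu_s\ll\lambda$. In the continuous case it follows from dominated convergence.

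\textbf{Step 1: $b^N(s)\to b(s)$.} This is the inversion of monotone functions, as with quantiles. The lower bound is straightforward. For $x<b(s)$ we have $F(x)<f(s)$, so $F_N(x)<f(s)$ for large $N$, hence $b^N(s)\ge x$ and $\liminf_N b^N(s)\ge b(s)$.

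The upper bound is the main obstacle. It needs $F(x)>f(s)$ for every $x>b(s)$, i.e.\ that $F$ does not sit flat at level $f(s)$ to the right of $b(s)$. For $\omega=\is_{\{x\le0\}}$, a flat piece would mean $\mu_s$ puts no mass in an interval just beyond the $f(s)/\Lambda$-quantile. In general this cannot be excluded from $\mu_s\ll\lambda$ alone, so I will obtain strict positivity from the comparison in the previous proposition. Precisely, I would argue that the limit density satisfies $u(\cdot,s)>0$ everywhere. To do this, either I strengthen the coupling with a lower comparison, where the killing rate is at most $\Lambda$ so that the process dominates a BBM killed at rate $\Lambda$ with a Gaussian-positive density, or else I verify that the downstream uses tolerate this. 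With strict positivity in hand, $F$ is strictly increasing wherever $0<F<\Lambda$. Then for $x>b(s)$ we get $F(x)>f(s)$, so $F_N(x)>f(s)$ eventually and $b^N(s)\le x$. Together with the lower bound this gives $b^N(s)\to b(s)$.

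If strict positivity is unavailable, the fallback is to avoid needing convergence of $b^N$ at all. For continuous $\omega$ one has the identity $F_N(b^N(s))=f(s)$. In the indicator case the same holds up to $O(1/N)$, since $\mu_s^N$ has atoms of size $1/N$. Any subsequential limit $\beta$ of $b^N(s)$ then satisfies $F(\beta)=f(s)$. When $\phi\ge0$ is not constant, different such $\beta$ could give different limits, so monotonicity is what really pins down $\beta=b(s)$.

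\textbf{Step 2: convergence of the integrand.} Set $g_N:=\phi(\cdot)\omega(\cdot-b^N(s))$ and $g:=\phi(\cdot)\omega(\cdot-b(s))$, and split
\[
\langle\mu_s^N,g_N\rangle-\langle\mu_s,g\rangle=\langle\mu_s^N,g_N-g\rangle+\big(\langle\mu_s^N,g\rangle-\langle\mu_s,g\rangle\big).
\]
The second bracket tends to $0$ by weak convergence. When $\omega=\is_{\{x\le 0\}}$, $g$ is discontinuous only at $b(s)$, which is a $\mu_s$-null set, so the continuous mapping theorem still applies.

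For the first term, consider the continuous case first. Since $\omega$ is monotone with limits $1$ at $-\infty$ and $0$ at $+\infty$, it is uniformly continuous. Hence $\|g_N-g\|_\infty\le\|\phi\|_\infty\sup_y|\omega(y-b^N(s))-\omega(y-b(s))|\to0$.

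In the indicator case, $|g_N-g|\le\|\phi\|_\infty\is_{I_N}$, where $I_N$ is the interval between $b^N(s)$ and $b(s)$. For any $\delta>0$, $I_N$ is eventually contained in $[b(s)-\delta,b(s)+\delta]$. Then
\[
\limsup_N\mu_s^N(I_N)\le\mu_s([b(s)-\delta,b(s)+\delta])
\]
by the Portmanteau theorem applied to closed sets. The right-hand side tends to $0$ as $\delta\to0$ because $\mu_s\ll\lambda$, which completes the proof.
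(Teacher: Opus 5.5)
Your Step 1 asserts $b^N(s)\to b(s)$, but this is false under the lemma's hypotheses, and the repair you propose does not prove the lemma as stated. As you observe yourself, $F$ can be flat at level $f(s)$ on an interval $[b(s),\bar b]$. A concrete instance: take $\omega=\is_{\{x\le 0\}}$, $\Lambda=2$, $f(s)=1$ and $\mu_s$ uniform on $[0,1]\cup[2,3]$. Empirical measures with $\lceil N/2\rceil-1$ evenly spaced atoms in $[0,1]$ and the rest evenly spaced in $[2,3]$ converge weakly to $\mu_s$, yet $b^N(s)\ge 2$ while $b(s)=1$. Strict positivity of the limit density is not a hypothesis of the lemma. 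Your lower-comparison coupling is only sketched. Even in the paper's setting positivity can fail, for instance at $s=0$ if $\rho$ has a gap in its support. Your fallback is the right idea, but you drop it because of an incorrect remark. Distinct subsequential limits $\beta$ with $F(\beta)=f(s)$ cannot produce distinct limits, for any bounded $\phi$ of any sign. Monotonicity does not pin down $\beta$; it makes the choice of $\beta$ irrelevant. As written, Step 2 relies on $b^N(s)\to b(s)$, both for $\|g_N-g\|_\infty\to 0$ and for $I_N\subset[b(s)-\delta,b(s)+\delta]$, so it is unjustified.

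The missing step is exactly the paper's argument. First, the $b^N(s)$ are bounded: your lower bound handles one side, and $F(x)\to\Lambda>f(s)$ gives an upper bound. So let $b^\star$ be a subsequential limit along $N_k$. We have $F_N(b^N(s))\in[f(s),f(s)+\Lambda/N)$, with equality in the continuous case. Monotonicity then gives $F_{N_k}(b^\star-\delta)\le F_{N_k}(b^{N_k}(s))\le F_{N_k}(b^\star+\delta)$ for large $k$. Let $k\to\infty$ and then $\delta\to 0$; continuity of $F$ yields $F(b^\star)=f(s)$. Also $F(b(s))=f(s)$ by continuity of $F$, so $b^\star\ge b(s)$. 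Now $y\mapsto\omega(y-b^\star)-\omega(y-b(s))$ is nonnegative and has zero $\mu_s$-integral, so it vanishes $\mu_s$-a.e. In the indicator case this says $\mu_s((b(s),b^\star])=0$. Hence $\langle\mu_s,\phi\,\omega(\cdot-b^\star)\rangle=\langle\mu_s,\phi\,\omega(\cdot-b(s))\rangle$ for every bounded $\phi$. Your Step 2 then goes through verbatim along the subsequence with $b^\star$ in place of $b(s)$, since the point $b^\star$ is also $\mu_s$-null. The resulting limit does not depend on the subsequence, so the whole sequence converges.
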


\begin{proof}
    Fix an element $\sigma\in \Omega$ in our state space at which $\mu_s^N(\sigma)$ converges weakly to $\mu_s(\sigma)$. We will omit dependence on $\sigma$ here onwards for readability. 
    
    Consider the first case that $\omega$ is continuous. %In this case $\langle \mu_s^N,\omega(\cdot-b^N(s))\rangle = \Lambda f(s) = \langle \mu_s,\omega(\cdot-b(s))\rangle$. 
    Define the function $B(x,\nu):=\langle \omega(\cdot-x),\nu\rangle$. Then for any probability measure on $\B{R}$, $B$ is a continuous and non-decreasing function of $x$ with $\lim_{x\to\infty}B(x,\nu)=1$, $\lim_{x\to-\infty}B(x,\nu)=0$, and $B(b^N(s),\mu^N_s)=f(s)/\Lambda=B(b(s),\mu_s)$ for any $s$. %This proves the first convergence. 
    Since $\omega$ is continuous and bounded, thus by weak convergence, $B(x,\mu^N_s)\to B(x,\mu_s)$. Moreover, since $B(x,\mu^N_s)$ is monotonic increasing for each $N$ and $s$ and converges pointwise to $B(x,\mu_s)$, thus convergence is uniform in $x$ (see \cite{chowTeicher}, \S 8.2, Lemma 3, for example). Now let $b^\star(s)=\lim_{k\to\infty}b^{N_k}(s)$ be a subsequential limit. Therefore $B(b^{N_i}(s),\mu^N_s)$ converges to $B(b^{N_i}(s),\mu_s)$ uniformly for all $i$. Finally, as $B(x,\nu)$ is continuous in $x$, we can conclude that 
    \begin{align*}
        B(b(s),\mu)= f(s)/\Lambda = \lim_{k\to\infty}B(b^{N_k}(s),\mu^{N_k})=\lim_{k\to\infty}B(b^{N_k}(s),\mu)=B(\lim_{k\to\infty}b^{N_k}(s),\mu)=B(b^\star(s),\mu)
    \end{align*}
    Therefore $b^\star(s)\geq b(s)$. Then as $x\mapsto \omega(y-x)$ is non-decreasing, $\omega(y-b^\star(s))-\omega(y-b(s))$ is non-negative for all $y$. Then $0=f(s)/\Lambda- f(s)/\Lambda=B(b^\star(s),\mu)-B(b(s),\mu)=\int_{\B{R}}\omega(y-b^\star(s))-\omega(y-b(s))\mu(dy)$, which is to say that $\omega(y-b^\star(s))=\omega(y-b(s))$ for $\mu$-almost all $y$. By the triangle inequality:
    \begin{align*}
        |\langle &\mu_s^{N_k},\phi(\cdot)\omega(\cdot-b^{N_k}(s))\rangle - \langle \mu_s,\phi(\cdot)\omega(\cdot-b^\star(s))\rangle| \\
        &\leq |\langle \mu_s^{N_k},\phi(\cdot)\omega(\cdot-b^{N_k}(s))\rangle - \langle \mu_s,\phi(\cdot)\omega(\cdot-b^{N_k}(s))\rangle| + |\langle \mu_s,\phi(\cdot)\omega(\cdot-b^{N_k}(s))\rangle - \langle \mu_s,\phi(\cdot)\omega(\cdot-b^\star(s))\rangle|.
    \end{align*}
    The first term on the right hand side of the inequality converges to $0$ by weak convergence, since $x\mapsto \phi(x)\omega(x-b^N(s))$ is bounded and continuous. Moreover, since $\omega$ is bounded, continuous, and monotonic, thus $\omega$ is uniform continuous, and therefore $\omega(x-b^{N_k})$ converges to $\omega(x-b^\star)$ uniformly for $x\in \B{R}$. Therefore the second term is converges to $0$ as $k\to\infty$. Finally, we note that, as $\omega(y-b^\star(s))=\omega(y-b(s))$ $\mu$-a.s., thus $\langle \mu_s,\phi(s)\omega(\cdot-b^\star(s))\rangle = \langle \mu_s,\phi(s)\omega(\cdot-b(s))\rangle$. 

    Next consider the case $\omega=\is_{x\leq 0}$, so $b(s)=\inf\{x:\Lambda\mu_s((-\infty,x])\geq f(s)\}$ and $\omega(x-b(s))=\is_{\{\Lambda\mu_s((-\infty,x])<f(s)\}}$. Similarly $b^N(s)=\inf\{x:\Lambda\mu_s^N((-\infty,x])\geq f(s)\}$ and $\omega(x-b^N(s))=\is_{\{\Lambda\mu_s^N((-\infty,x])<f(s)\}}$. 
    
    Since, $\mu_s\ll\lambda$, so $\mu_s((-\infty,x])$ is continuous, thus again, as $\mu_s^N((-\infty,x])$ is monotonic, $\mu^N_s((-\infty,x])$ converges to $\mu_s((-\infty,x])$ uniformly in $x$ (see \cite{chowTeicher}, \S 8.2, Lemma 3, for example). Moreover, for any continuous and bounded function $\phi$, $\int_{-\infty}^z \phi(x)\mu_s^{N_k}(dx)$ is the distribution function of a rescaled probability measure (which we will call $\phi\mu_s^{N_k}$) and as an immediate consequence of the weak convergence $\mu_s^{N_k}\Rightarrow \mu_s$, we have $\phi\mu_s^{N_k}\Rightarrow \phi\mu_s$. Therefore we also have that $\int_{-\infty}^z \phi(x)\mu_s^{N_k}(dx)$ converges to $\int_{-\infty}^z \phi(x)\mu_s(dx)$ uniformly in $z$. 
    
    Now suppose that $b^\star(s)=\lim_{k\to\infty}b^{N_k}(s)$ is a subsequential limit. Therefore $\mu^{N_k}_s((-\infty,b^{N_k}(s)])\to\mu_s((-\infty,b^\star(s)])$. Moreover, for any $N$ and $\epsilon>0$, $\Lambda\mu^N_s((-\infty,b^N(s)])\geq f(s)$ and $\Lambda\mu^N_s((-\infty,b^N(s)-\epsilon])<f(s)$. Therefore $\Lambda\mu_s((-\infty,b^\star(s)])\geq f(s)$ and $\Lambda\mu_s((-\infty,b^\star(s)-\epsilon])\leq f(s)$ for any $\epsilon$. Thus $\Lambda\mu_s((-\infty,b^\star(s)])=f(s)$. Therefore if $b^\star(s)>b(s)$, then $\mu_s((b(s),b^\star(s)])=0$. Then
    \begin{align}\nonumber
        |\langle \mu_s^{N_k},\phi(\cdot)\omega(\cdot-b^{N_k}(s))\rangle - \langle \mu_s,\phi(\cdot)\omega(\cdot-b(s))\rangle| = \Big|\int_{-\infty}^{b^{N_k}(s)}\phi(x)\mu^{N_k}_s(dx)-\int_{-\infty}^{b(s)}\phi(x)\mu_s(dx)\Big|\\
        \nonumber\leq \Big|\int_{-\infty}^{b^{N_k}(s)}\phi(x)\mu^{N_k}_s(dx)-\int_{-\infty}^{b^{N_k}(s)}\phi(x)\mu_s(dx)\Big| + \Big|\int_{-\infty}^{b^{N_k}(s)}\phi(x)\mu_s(dx)-\int_{-\infty}^{b^\star(s)}\phi(x)\mu_s(dx)\Big|\\
        +\Big|\int_{-\infty}^{b^\star(s)}\phi(x)\mu_s(dx)-\int_{-\infty}^{b(s)}\phi(x)\mu_s(dx)\Big|\\
        \label{triangleBConv}\leq \sup_{z\in \B{R}}\Big|\int_{-\infty}^z \phi(x)\mu_s^{N_k}(dx)-\int_{-\infty}^z \phi(x)\mu_s(dx)\Big| + \|\phi\|_\infty (\mu_s((b^{N_k}(s),b^\star(s)])+\mu_s((b(s),b^\star(s)])).
    \end{align}
    By uniform convergence in $z$, the first term converges to $0$; by the facts that $b^{N_k}(s)\to b^\star(s)$ and $\mu_s\ll\lambda$, the second term converges to $0$; and the third term is $0$. This proves the theorem. 
\end{proof}

\begin{propn} \label{weakRepPropn}
    Let $Q^\infty_T$ be a subsequential limit of $(Q^N_T)_{N=1,2,\ldots}$ and let $(\mu_t)_{t\in[0,T]}\sim Q^\infty_T$. %and define $b(t):=\inf\{x:\Lambda \langle \mu_t,\omega(\cdot-x)\rangle\geq f(t)\}$. 
    Suppose that $w$ is either continuous or $w(x)=\is_{\{x\leq 0\}}$. Then for any $\phi(x,t)\in \C{BC}^{2,1}(\B{R}\times [0,\infty),\B{R})$ we have:
    \begin{align}\label{convOfWeakRep}\langle \mu_t,\phi(\cdot,t)\rangle = \langle \rho, \phi(\cdot,0)\rangle + \int_0^t \langle \mu_s, \frac12 \phi_{xx}(\cdot,s)+\phi_t(\cdot,s)+f(s)\phi(\cdot,s)-\Lambda\phi(\cdot,s)\omega(\cdot-b^\mu(s))\rangle ds,\end{align}
    $Q^\infty_T$-almost surely.%, so that the density $u(x,t)$ of $\mu_t$ is a weak solution to the PDE:
    %$$u_t = \frac12 u_{xx} + f(s)u(x,t)-\Lambda\omega(x-b(t))u(x,t),$$
    %with the free boundary determined by the integral condition $\int_\B{R}u(x,t)dx=1$.
\end{propn}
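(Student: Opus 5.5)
We pass to the limit in the semimartingale decomposition \eqref{probRepIto} of Proposition \ref{probRepPropn}. By Skorokhod's representation theorem we realise the subsequence $\mu^{N_k}$ and the limit $\mu\sim Q^\infty_T$ on one probability space, with $\mu^{N_k}\to\mu$ almost surely in $\C{D}([0,T],\C{M}_F^w(\bar{\B{R}}))$. By Proposition \ref{vagueContinuity} the limit path is weakly continuous. Skorokhod convergence to a continuous path is uniform convergence, so $\mu^{N_k}_s\Rightarrow\mu_s$ for every $s\in[0,T]$ simultaneously, almost surely. The preceding proposition shows $\mu_s\ll\lambda$ and that $\mu_s$ puts no mass at $\pm\infty$. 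We also have $\rho^{N}\Rightarrow\rho$ by assumption.

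Fix $\phi\in\C{BC}^{2,1}$ and pass to the limit term by term.
\begin{itemize}
\item The left-hand side and the initial term converge by weak convergence.
\item The linear drift term $\int_0^t\langle\mu^{N_k}_s,\frac12\phi_{xx}+\phi_t+f\phi\rangle ds$ converges by pointwise weak convergence in $s$ together with bounded convergence.
\item The martingales $M^{N,W}_t$ and $M^{N,b}_t$ tend to $0$ in $L^2$ by Burkholder--Davis--Gundy and the quadratic variation bounds of Proposition \ref{probRepPropn}. Along a further subsequence they tend to $0$ almost surely for each fixed rational $t$. Both sides of the limiting identity are continuous in $t$, so it extends to all $t$.
\end{itemize}

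The main step is the nonlinear term $\frac{f(s)\langle\mu^N_s,\phi\,\omega(\cdot-b^N(s))\rangle}{\langle\mu^N_s,\omega(\cdot-b^N(s))\rangle}$. First we rewrite the denominator. When $\omega$ is continuous, $\Lambda\langle\mu^N_s,\omega(\cdot-b^N(s))\rangle=f(s)$ exactly, so the term equals $\Lambda\langle\mu^N_s,\phi\,\omega(\cdot-b^N(s))\rangle$. When $\omega=\is_{x\le0}$ the denominator is not exactly $f(s)/\Lambda$: it is $\mu^N_s((-\infty,b^N(s)])$, and by Lemma \ref{bConvLemma} applied with $\phi\equiv1$ this tends to $\mu_s((-\infty,b(s)])$. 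That limit equals $f(s)/\Lambda$ because $\mu_s$ has a continuous distribution function, so the infimum defining $b(s)$ is attained with equality. In both cases Lemma \ref{bConvLemma} gives, for each $s$, that the numerator converges to $f(s)\langle\mu_s,\phi\,\omega(\cdot-b(s))\rangle$ and the denominator to $f(s)/\Lambda$. Hence for each $s$ with $f(s)>0$ the integrand converges almost surely to $\Lambda\langle\mu_s,\phi(\cdot,s)\omega(\cdot-b^\mu(s))\rangle$.

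Two technical points remain. First, Lemma \ref{bConvLemma} is stated for a fixed $\phi$ and fixed $s$, whereas here $\phi(\cdot,s)$ depends on $s$. This is harmless: we apply the lemma at each $s$ to the function $\phi(\cdot,s)$, and on our almost sure event the weak convergence holds for all $s$ at once. Second, we need a dominating bound to integrate in $s$. The quotient is a weighted average of $\phi$ multiplied by $f$, so it is bounded by $\|f\|_\infty\|\phi\|_\infty$. Where $f(s)=0$ the term vanishes both before and after the limit; here we read the quotient as $0$ when the denominator vanishes, and the limit $\Lambda\langle\mu_s,\phi\,\omega(\cdot-b(s))\rangle$ is also $0$ since $b(s)=-\infty$ or its weight has zero mass. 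Dominated convergence in $s$ then gives convergence of the integral.

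Combining the limits yields \eqref{convOfWeakRep} for each fixed $\phi$, almost surely. To have it simultaneously for all $\phi$, we note that the identity is stable under uniform approximation of $\phi$ and its derivatives, so it suffices to prove it for a countable dense family, which gives a single null set.
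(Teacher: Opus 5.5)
Your proposal is correct and follows essentially the same route as the paper. You use Skorokhod representation plus weak continuity of the limit to get $\mu^N_s\Rightarrow\mu_s$ for all $s$, Lemma~\ref{bConvLemma} with dominated convergence for the nonlinear term (using $\mu_s\ll\lambda$ to identify the limiting denominator with $f(s)/\Lambda$), and Burkholder--Davis--Gundy to kill the martingale part.

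Your subsequence argument for the martingale terms and your explicit treatment of the indicator-case denominator are slightly cleaner than the paper's version. The one loose point is your optional final step: $\C{BC}^{2,1}$ is not separable in the sup norm, so you should approximate $\phi,\phi_{xx},\phi_t$ boundedly and pointwise rather than uniformly. The identity is stable under that kind of approximation by dominated convergence.
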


\begin{proof}
    Without loss of generality, we will consider the convergent subsequence to be $(Q^N_T)_{N=1,2,\ldots}$ to avoid using sub-subscript. For measure-valued process $\mu=(\mu_t)_{t\geq 0}$, define $b^\mu(t):=\inf\{x:\Lambda\langle\mu_s,\omega(\cdot-x)\rangle\geq f(t)\}$ and note that $b^N=b^{\mu^N}$. Then define the function:
    \begin{align*} G(\mu,\phi,t):=\langle \mu_t,\phi(\cdot,t)\rangle - \langle \rho, \phi(\cdot,0)\rangle - \int_0^t \langle \mu_s, \frac12 \phi_{xx}(\cdot,s)+\phi_t(\cdot,s)+f(s)\phi(\cdot,s)\rangle-\\\frac{f(s)\langle\mu_s,\phi(\cdot,s)\omega(\cdot-b^\mu(s))\rangle}{\langle \mu_s,\omega(\cdot-b^\mu(s))\rangle} ds.\end{align*}
    By Proposition \ref{probRepIto}, $G(\mu^N,\phi,t)=M^{N,W}_t+M^{N,b}_t$, and so by the Burkholder-Davis-Gundy inequality, since $\B{E}[[M^{N,W}]_t]$ and $\B{E}[[M^{N,b}]_t]$ both converge to $0$ as $N\to\infty$, there exists a constant $\kappa$ such that
    $$\B{E}[G(\mu^N,\phi,t)^2]=\B{E}[(M_t^{N,W}+M_t^{N,b})^2]\leq 4\B{E}[(M_t^{N,W})^2+(M_t^{N,b})^2]\leq 4\kappa(\B{E}[[M^{N,W}]_t]+\B{E}[[M^{N,b}]_t])\xrightarrow[N\to\infty]{}0.$$
    Now by the Skorokhod representation theorem, there exists a sequence of random measure-valued processes $(\nu_t^N)_{t\in [0,T]}$ and $(\nu_t)_{t\in [0,T]}$ defined on the same probability space $(\Omega,\C{F},\B{P})$ such that $(\nu_t^N)_{t\in [0,T]}$ converges in the Skorokhod topology to $(\nu_t)_{t\in [0,T]}$ $\B{P}$-almost surely, with $(\nu^N_t)_{t\in [0,T]}\sim Q_T^N$ and $(\nu_t)_{t\in [0,T]}\sim Q_T^\infty$. By the definition of convergence in the Skorokhod topology on $\C{D}([0,T],\C{M}^w_F(\B{R}))$, there exists a sequence $\ell_N:[0,T]\to[0,T]$ such that $\sup_{t\in [0,T]}|\ell_N(t)-t|\xrightarrow[N\to\infty]{}0$ and $\sup_{t\in [0,T]}|\langle \nu_s,\phi(\cdot)\rangle - \langle \nu_{\ell_N(s)}^N,\phi(\cdot)\rangle|\xrightarrow[N\to\infty]{}0$ for all $\phi\in \C{BC}^{2,1}$ $\B{P}$-a.s. By Proposition \ref{vagueContinuity} the map $t\mapsto \nu_t$ is continuous on $[0,T]$ $Q^\infty_T$-a.s., so we have that for $\phi\in \C{BC}^{2,1}$:
    $$|\langle \phi,\nu_s^N\rangle-\langle \phi,\nu_s\rangle|\leq |\langle \phi,\nu_s^N\rangle -\langle \phi, \nu_{\ell_N^{-1}(s)}\rangle|+|\langle \phi,\nu_{\ell_N^{-1}(s)}\rangle - \langle \phi,\nu_s\rangle|\xrightarrow[N\to\infty]{\B{P}-a.s.}0,$$
    which is to say that $\nu_s^N$ converges to $\nu_s$ for all $s\in [0,T]$ in the weak topology. Therefore for $\phi\in \C{BC}^{2,1}$, $\langle \nu^N_t,\phi(\cdot,t) \rangle \xrightarrow[N\to\infty]{\B{P}-a.s.} \langle \nu_t,\phi(\cdot,t)\rangle$ and, by Lemma \ref{bConvLemma} and the dominated convergence theorem,
    \begin{align*}
    \int_0^t \langle \nu_s^N, \frac12 \phi_{xx}(\cdot,s)+\phi_t(\cdot,s)+f(s)\phi(\cdot,s)\rangle ds &\xrightarrow[N\to\infty]{\B{P}-a.s.}\int_0^t \langle \nu_s, \frac12 \phi_{xx}(\cdot,s)+\phi_t(\cdot,s)+f(s)\phi(\cdot,s)\rangle ds,\\
    \int_0^t \frac{f(s)\langle \nu_s^N, \phi(\cdot,)\omega(\cdot-b^N(s))\rangle}{\langle \nu_s,\omega(\cdot-b^N(s))\rangle}ds &\xrightarrow[N\to\infty]{\B{P}-a.s.}\int_0^t \Lambda \langle \nu_s,\phi(\cdot,s)\omega(\cdot-b^\nu(s))\rangle ds.\end{align*}
    Finally, observe that $G(\nu^N,\phi,t)$ is bounded uniformly in $N$ by $\|\phi\|_\infty + t(\frac12\|\phi_{xx}\|_\infty+\|\phi_t\|_\infty+(\Lambda+\|f\|_\infty)\|\phi\|_\infty)$. Therefore $(|G(\nu^N,\phi,t)|)_{N\geq 0}$ is uniformly integrable and converges to $0$ in expectation, and hence converges to $0$ almost surely (see Appendixes, Proposition 2.3 in \cite{ethierKurtz}). Therefore $\lim_{N\to\infty}G(\mu^N,\phi,t)=0$ $\B{P}$-a.s. Since $\nu_s\ll\lambda$, thus $\langle \nu_s,\omega(\cdot-b^\nu(s))\rangle = f(s)/\Lambda$, and so:
    \begin{align*}
        \langle \mu_t,\phi(\cdot,t)-\langle \rho,\phi(\cdot,0)\rangle - \int_0^t \langle \mu_s,\frac12\phi_{xx}(\cdot,s)+\phi_t(\cdot,s)+f(s)\phi(\cdot,s)-\Lambda \phi(\cdot,s)\omega(\cdot-b(s))\rangle ds=0
    \end{align*}
    $\B{P}$-a.s. Under the measure $\B{P}$, $(\nu_s)_{s\in [0,T]}\sim Q_T^\infty$, therefore \eqref{convOfWeakRep} holds $Q_T^\infty$-almost surely, as required. 
\end{proof}

Finally we show that the equation \eqref{convOfWeakRep} has a unique solution when $\omega$ is continuous. In the case that $\omega(x)=\is_{x\leq 0}$, we already know by Theorem 1.8 of \cite{ettingerHeningEvans} that the weak solution is unique. 

\begin{propn} \label{uniquenessOmegaCts}
    Suppose that $\omega$ is continuous. Then there exists at most one measure-valued process $(\mu_t)_{t\in [0,T]}$ satsifying \eqref{convOfWeakRep} for all test functions $\phi\in \C{BC}^{2,1}(\B{R}\times [0,\infty),\B{R})$.
\end{propn}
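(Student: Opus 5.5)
We will compare two solutions through the mild (Duhamel) formulation and close the argument with Gronwall's inequality. Throughout, $(\mu^1_t)$ and $(\mu^2_t)$ are two measure-valued solutions of \eqref{convOfWeakRep} with the same initial datum $\rho$, and $b_i(t):=b^{\mu^i}(t)$.

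\textbf{Step 1 (mild form).} Let $P_t$ be the heat semigroup with generator $\frac12\partial_{xx}$, and set $F(t)=\int_0^t f$. Fix $\psi\in C_b^2(\B{R})$ and $t\in(0,T]$. We take the test function $\phi(x,s)=e^{F(t)-F(s)}P_{t-s}\psi(x)$ for $s\le t$, and extend it smoothly past $t$. This $\phi$ lies in $\C{BC}^{2,1}$ and satisfies $\phi_s+\frac12\phi_{xx}+f\phi=0$. Substituting into \eqref{convOfWeakRep} gives
\begin{align*}
\langle \mu^i_t,\psi\rangle = e^{F(t)}\langle \rho,P_t\psi\rangle - \Lambda\int_0^t e^{F(t)-F(s)}\langle \mu^i_s,\omega(\cdot-b_i(s))P_{t-s}\psi\rangle ds .
\end{align*}
By density, and because the $\mu^i_s$ are probability measures, this identity extends to every bounded measurable $\psi$. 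Boundedness of $P_{t-s}\psi$ by $\|\psi\|_\infty$ is all that is used.

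\textbf{Step 2 (controlling the nonlinearity).} Write $d(s):=\|\mu^1_s-\mu^2_s\|_{TV}$. Subtracting the two mild forms and taking the supremum over $\|\psi\|_\infty\le1$ gives
\begin{align*}
d(t)\le \Lambda e^{\|f\|_\infty T}\int_0^t \Big( d(s) + \sup_y|\omega(y-b_1(s))-\omega(y-b_2(s))|\Big)\,ds ,
\end{align*}
where the second term is understood in integrated form, namely $\int |\omega(y-b_1)-\omega(y-b_2)|\,\mu^2_s(dy)$. We control this term using monotonicity. Since $\omega$ is nonincreasing, the difference $\omega(y-b_1)-\omega(y-b_2)$ has a fixed sign in $y$. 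Hence
\begin{align*}
\int|\omega(y-b_1)-\omega(y-b_2)|\mu^2_s(dy)=\Big|\langle\mu^2_s,\omega(\cdot-b_1)\rangle-\langle\mu^2_s,\omega(\cdot-b_2)\rangle\Big| .
\end{align*}
By continuity of $\omega$ and absolute continuity of $\mu^i_s$, the definition of $b^{\mu}$ gives $\langle\mu^i_s,\omega(\cdot-b_i)\rangle=f(s)/\Lambda$ for $i=1,2$. So the right-hand side equals $|\langle\mu^2_s-\mu^1_s,\omega(\cdot-b_1)\rangle|\le d(s)$. This sign trick is the crux of the argument. It avoids any Lipschitz bound on the map $\mu\mapsto b^\mu$, which would fail where $\mu$ has little mass, and it uses only continuity and monotonicity of $\omega$.

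\textbf{Step 3 (conclusion).} Steps 1 and 2 combine to give $d(t)\le C\int_0^t d(s)\,ds$, with $C=2\Lambda e^{\|f\|_\infty T}$. The function $d$ is bounded by $2$ and measurable, since by Proposition \ref{vagueContinuity} the map $s\mapsto\mu^i_s$ is continuous and hence $d$ is a supremum over a countable family. Gronwall's inequality then forces $d\equiv0$, so $\mu^1_t=\mu^2_t$ for all $t\in[0,T]$.

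\textbf{Main obstacle.} We expect Step 1 to be the delicate part: justifying the time-dependent test function and extending the mild identity from $\C{BC}^{2,1}$ to bounded measurable $\psi$. This should go through by approximating $\psi$ boundedly and pointwise, using dominated convergence. A further point to check is that the solutions considered are absolutely continuous, as established for the limit points above. If only weak continuity is available, we would run the same argument with the bounded-Lipschitz norm in place of total variation; the sign trick still applies provided $\omega(\cdot-b_1)$ can be approximated uniformly by Lipschitz functions.
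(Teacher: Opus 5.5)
Your argument is correct and is essentially the paper's proof. Both test against the backward heat flow, measure the distance in a total-variation-type norm, control the free-boundary term with the same sign trick (monotonicity of $\omega$ together with $\langle\mu_s,\omega(\cdot-b^\mu(s))\rangle=f(s)/\Lambda$, so no Lipschitz bound on $\mu\mapsto b^\mu$ is needed), and close with Gr\"onwall. The remaining differences are cosmetic. You absorb the $f\phi$ term into the test function through $e^{F(t)-F(s)}$, which strictly needs $f$ continuous for $\phi\in\C{BC}^{2,1}$; the paper avoids this by keeping that term inside the Gr\"onwall bound. You also start from $\psi\in C_b^2$ and extend by density, which is more careful than the paper's direct use of $\psi\in C_b$.
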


\begin{proof}
    Let $(\mu_t)_{t\in [0,T]}$ and $(\nu_t)_{t\in [0,T]}$ both satisfy \eqref{convOfWeakRep} for all $\phi\in \C{BC}^{2,1}$, with $\mu_0=\rho=\nu_0$. Let $\psi\in C_b(\B{R})$ be a continuous test function with $\|\psi\|_\infty\leq 1$, and let $\phi(x,t)$ be the solution of the backwards heat equation $\phi_t + \frac12 \phi_{xx}=0$ with terminal condition $\phi(x,t)=\psi(x)$. So $|\phi(x,s)|\leq 1$ for all $x\in \B{R}$ and $s\in [0,t]$. Then \eqref{convOfWeakRep} yields that:
    \begin{align*}
        \langle \mu_t,\psi\rangle - \langle \nu_t,\psi\rangle = \int_0^t \langle \mu_s - \nu_s, f(s)\phi(\cdot,s)\rangle - \Lambda\langle \mu_s,\phi(\cdot,s)\omega(\cdot-b^\mu(s))\rangle + \Lambda \langle \nu_s,\phi(\cdot,s)\omega(\cdot-b^\nu(s))\rangle ds.
    \end{align*}
    Then by the triangle inequality
    \begin{align*}
        |\langle \nu_s,\phi(\cdot,s)\omega(\cdot-b^\nu(s))\rangle &- \langle \mu_s,\phi(\cdot,s)\omega(\cdot-b^\mu(s))\rangle| \\
        &\leq |\langle \nu_s - \mu_s, \phi(\cdot,s)\omega(\cdot-b^\mu(s))\rangle|+|\langle \nu_s,\phi(\cdot,s)(\omega(\cdot-b^\nu(s))-\omega(\cdot-b^\mu(s)))\rangle|.
    \end{align*}
    Now define the distance $d(\mu,\nu):=\sup_{\{g\in C(\B{R}):\|g\|_\infty \leq 1\}}|\langle \mu,g\rangle - \langle \nu,g\rangle|$ between two measures. So then as the function $\phi(\cdot,s)\omega(\cdot-b^\mu(s))$ is continuous in $x$ and bounded by $1$, thus $\|\langle \nu_s-\mu_s,\phi(\cdot,s)\omega(\cdot-b^\mu(s))\rangle|\leq d(\mu_s,\nu_s)$. Now suppose without loss of generality that $b^\mu(s)\leq b^\nu(s)$. Since $\omega$ is monotone decreasing, thus $\omega(x-b^\nu(s))-\omega(x-b^\mu(s))\geq 0$ for all $x$, so that:
    \begin{align*}
        |\langle \nu_s, \phi(\cdot,s)\big(\omega(\cdot-b^\nu(s))-\omega(\cdot-b^\mu(s)))\rangle| \leq \|\phi(\cdot,s)\|_\infty\langle \nu_s, \omega(\cdot-b^\nu(s))-\omega(\cdot-b^\mu(s))\rangle
    \end{align*}
    Next, we observe that by the continuity of $\omega$, $\langle \nu_s,\omega(\cdot-b^\nu(s))\rangle = f(s)/\Lambda = \langle \mu_s,\omega(\cdot-b^\mu(s))\rangle$, so that the right hand side above is 
    \begin{align*}\|\phi(\cdot,s)\|_\infty |\langle \mu_s,\omega(\cdot-b^\mu(s))\rangle - \langle \nu_s,\omega(\cdot-b^\mu(s))\rangle|\leq d(\mu_s,\nu_s).
    \end{align*}
    Putting this together, using the fact that $f$ is bounded by $\Lambda$, yields:
    \begin{align*}
        |\langle \mu_t,\psi\rangle-\langle \nu_t,\psi\rangle|\leq 3\Lambda\int_0^t d(\mu_s,\nu_s)ds
    \end{align*}
    for any continuous $\psi$ bounded by $1$. Therefore $d(\mu_t,\nu_t)\leq 3\Lambda \int_0^t d(\mu_s,\nu_s)$ and hence by Gr\"{o}nwall's inequality, $d(\mu_s,\nu_s)\equiv 0$, so that we can conclude $\mu_s\equiv \nu_s$, and thus the solution is unique. 
\end{proof}

\begin{proof}\textit{(Of Theorem \ref{1stMainThemOmega})}
By Proposition \ref{tightness}, the sequence $(Q^N_T)_{N=1,2,\ldots}$ has subsequential limits, and by Proposition \ref{weakRepPropn}, if $(\mu_t)_{t\in [0,T]}\sim Q^\infty_T$, then $\mu_t$ is a weak solution to the FBP \eqref{omegaNBMMHydro}. By Theorem 1.8 of \cite{ettingerHeningEvans}  (in the case that $\omega(x)=\is_{x\leq 0}$) and Proposition \ref{uniquenessOmegaCts} (in the case that $\omega$ is continuous), this solution is unique. Therefore as all subsequential limits are the same, $\mu_t$ is the unique weak solution to \eqref{omegaNBMMHydro}.
\end{proof}

\bibliographystyle{plain}
\bibliography{bps}

\end{document}